\newcommand{\R}{\varmathbb{R}}
\newcommand{\Rn}{{\varmathbb{R}^n}}
\newcommand{\Ha}{\mathcal{H}}
\newcommand{\C}{\mathcal{C}}
\def\diam{\qopname\relax o{diam}}
\def\loc{\qopname\relax o{loc}}
\def\dist{\qopname\relax o{dist}}
\def\diam{\qopname\relax o{diam}}
\def\inte{\qopname\relax o{int}}
\pgfplotsset{compat=1.15}
\theoremstyle{plain}
\newtheorem{theorem}[equation]{Theorem}
\newtheorem{lemma}[equation]{Lemma}
\newtheorem{proposition}[equation]{Proposition}
\newtheorem{corollary}[equation]{Corollary}
\theoremstyle{definition}
\newtheorem{definition}[equation]{Definition}
\theoremstyle{remark}
\newtheorem{remark}[equation]{Remark}
\numberwithin{equation}{section}
\title{On Choquet integrals and pointwise estimates}
\author{Petteri Harjulehto} 
\address[Petteri Harjulehto]{Department of Mathematics and Statistics,
FI-00014 University of Helsinki, Finland} \email{petteri.harjulehto@helsinki.fi}
\author{Ritva Hurri-Syrj\"anen}
\address[Ritva Hurri-Syrj\"anen]{Department of Mathematics and Statistics,
FI-00014 University of Helsinki, Finland} \email{ritva.hurri-syrjanen@helsinki.fi}
\date{\today}
\begin{document}

\begin{abstract}We consider inequalities where integrals are defined in the sense of  Choquet with respect to Hausdorff content.
We study cases where continuously differentiable functions
are defined on open, connected sets with  so much regularity that
there exists  a pointwise estimate  between 
the values of a function and its gradient under the maximal  operator or the Riesz potential,
at every point of the set.
We show that certain  Hardy inequalities and Poincare-Sobolev  inequalities are 
valid  in this context.
\end{abstract}

\keywords{Hausdorff content, Choquet integral, Hardy inequality, Poincar\'e-Sobolev inequality, $s$-John domain.} 
\subjclass[2020]{Primary 46E35, 31C15, Secondary 26B35, 26D10.}

\maketitle

%%%%%%%%%%%%%%%%%%%%%%%%%%%%%%%%%%%%%%%%%%%%%%%%%%%
%%%%%%%%%%%%%%%%%%%%%%%%%%%%%%%%%%%%%%%%%%%%%%%%%%%%%%%
\section{Introduction}

We consider inequalities where integrals are defined in the sense of  Choquet with respect to the Hausdorff content
$\Ha_\infty^{\delta} $, $0<\delta \le n$.
Let us recall that for any non-negative  function  $u$ defined on
an open, connected set $\Omega$  in the Euclidean $n$-space, $n\geq 2$,   the integral in the sense of Choquet with respect to Hausdorff content is defined by
\begin{equation}
\int_\Omega u(x) \, d \Ha^{\delta}_\infty := \int_0^\infty \Ha^{\delta}_\infty\big(\{x \in \Omega : u(x)>t\}\big) \, dt. 
\end{equation}
The definition goes back to \cite{Cho53}.
Research of the properties of these integrals  in potential theory was initiated by  David R. Adams \cite{Adams1975}, \cite{Adams1986}, \cite{Adams1998},
and continued in \cite{Den94}, \cite{DX}, \cite{AX2012}.
Recently, the study has revived and  new results have been published
\cite{PonceSpector20}, \cite{MartinezSpector2021}, \cite{OoiPhuc22}, \cite{HH-S_JFA}, \cite{PonceSpector23}, \cite{HH-S_AAG},
\cite{ChenSpector2023}, \cite{ChenOoiSpector2023}.
In the present paper we are interested in results for smooth functions defined on sets with enough regularity.

The underlying assumption is that 
sets $\Omega$
have  so much regularity that
there exists  a pointwise estimate 
between 
the values of any continuously differentiable function and its gradient under the maximal  operator or the Riesz potential,
at every point of $\Omega$.
Then, we show that some
Hardy and  Poincar\'e-Sobolev  inequalities are valid for functions defined on $\Omega$.
Whenever $\delta =n$, the inequalities recover some  corresponding  known classical cases.
The boundedness of the fractional Hardy-Littlewood maximal operator \cite[Theorem 7(a)]{Adams1998} by Adams is essential in some of the proofs.
We recall the basic properties of Hausdorff content and Choquet integrals  in Section \ref{Preliminaries} in order to make the text as self contained as possible.
The Hardy inequalities 
for continuously differentiable functions with compact support 
are considered in  Section \ref{Section-Hardy}. 
The main theorem there implies  that the Hardy-type inequality is valid for functions defined on convex domains, but also  on some other domains which are not convex
as the  following result shows.
\begin{corollary}\label{HardyCorollary}
Let $\Omega$ be an annulus in $\Rn$, $n\geq 2$,
\begin{equation*}
\Omega =\{x\in\Rn : 0<r<\vert x\vert < R <\infty\}\,.
\end{equation*}
Let $\delta \in (0, n]$, $\kappa \in [0, 1)$,  $p\in (\delta /n, \delta/\kappa )$.
Then there exists a positive constant $c <\infty$  depending only on  $n$, $\delta$,  $\kappa$, $p$, $r$, and $R$ such that
\begin{equation}\label{HardyIneq}
\int_{\Omega}\frac{\vert u(x)\vert^p}{\dist (x,\partial \Omega)^{p(1-\kappa)}}\,d\Ha_{\infty}^{\delta-\kappa p}\le
c\int_{\Omega}\vert \nabla u(x)\vert^p\,d\Ha_{\infty}^{\delta}
\end{equation}
for all $u\in C^{1}_0(\Omega )$.
\end{corollary}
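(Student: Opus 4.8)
The plan is to deduce Corollary~\ref{HardyCorollary} from the main theorem of Section~\ref{Section-Hardy}, which applies to any domain admitting a pointwise bound of its $C^1_0$ functions against a (fractional) maximal operator of the gradient; so the real content is to verify such a bound for the annulus. Writing $d(x)=\dist(x,\partial\Omega)=\min\{\,|x|-r,\ R-|x|\,\}$ and extending any $u\in C^1_0(\Omega)$ by zero (so $u\in C^1(\Rn)$ and $\spt\nabla u\subset\Omega$), I aim for
\begin{equation*}
|u(x)|\ \le\ c_0(n,\kappa,r,R)\, d(x)^{1-\kappa}\,\mathcal{M}_\kappa\!\big(|\nabla u|\big)(x)\qquad\text{for every }x\in\Omega,
\end{equation*}
where $\mathcal{M}_\kappa f(x)=\sup_{\rho>0}\rho^{\kappa}\vint_{B(x,\rho)}|f|$ is the fractional maximal operator of order $\kappa$ from \cite[Theorem 7(a)]{Adams1998} (for $\kappa=0$ this is the familiar pointwise Hardy inequality $|u(x)|\le c_0\,d(x)\,M(|\nabla u|)(x)$; the same bound holds with the Riesz potential $I_\kappa$ in place of $\mathcal{M}_\kappa$, so either form of the main theorem applies). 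Convex domains satisfy such a bound via the supporting hyperplane at the nearest boundary point; since an annulus is not convex, the point of the proof is to find a substitute for this.

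Fix $x\in\Omega$ and let $\bar x\in\partial\Omega$ be a nearest boundary point, so $|x-\bar x|=d(x)$. Since $B(x,d(x))\subset\Omega$ and $u\in C^1(\Rn)$, the classical pointwise Poincar\'e estimate on $B:=B(x,3d(x))$ gives
\begin{equation*}
|u(x)-u_{B}|\ \le\ c(n)\int_{B}\frac{|\nabla u(y)|}{|x-y|^{n-1}}\,dy\,;
\end{equation*}
decomposing this integral over the dyadic annuli $B(x,2^{-k}3d(x))\setminus B(x,2^{-k-1}3d(x))$, using $|x-y|\le 3d(x)$, and summing the resulting geometric series (convergent because $\kappa<1$), one bounds it by $c(n,\kappa)\,d(x)^{1-\kappa}\,\mathcal{M}_\kappa(|\nabla u|)(x)$. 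To control the remaining average $u_{B}$ I use the only non-convex input: at every $\bar x\in\partial\Omega$ the complement $\Rn\setminus\Omega$ contains a ball $B^{*}$ tangent to $\partial\Omega$ at $\bar x$ — of radius $\min\{r,d(x)\}$ inside $\overline{B(0,r)}$ if $|\bar x|=r$, and of radius $d(x)$ inside $\Rn\setminus B(0,R)$ if $|\bar x|=R$. With $\bar x$ the nearest boundary point to $x$, this $B^{*}$ lies in $B$, has measure $|B^{*}|\ge\lambda(n,r,R)\,|B|$ for a positive $\lambda$, and $u\equiv0$ on $B^{*}$; hence the Poincar\'e inequality for functions vanishing on a set of positive measure gives $|u_{B}|\le c(n)\lambda^{-1} d(x)\,\vint_{B}|\nabla u|\le c(n,r,R)\,d(x)^{1-\kappa}\,\mathcal{M}_\kappa(|\nabla u|)(x)$. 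Adding the two bounds yields the desired pointwise estimate.

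Granting it, the proof is finished as in the main theorem: divide by $d(x)^{1-\kappa}$, raise to the power $p$, integrate in the Choquet sense against $\Ha^{\delta-\kappa p}_\infty$, enlarge the domain of integration to $\Rn$, and apply the boundedness $\mathcal{M}_\kappa\colon L^p(\Rn,\Ha^{\delta}_\infty)\to L^p(\Rn,\Ha^{\delta-\kappa p}_\infty)$ of \cite[Theorem 7(a)]{Adams1998}, which returns $\int_{\Rn}|\nabla u|^p\,d\Ha^{\delta}_\infty=\int_{\Omega}|\nabla u|^p\,d\Ha^{\delta}_\infty$ since $\nabla u$ vanishes outside $\Omega$. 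The hypothesis $p\in(\delta/n,\delta/\kappa)$ is exactly what legitimizes this last step: $p<\delta/\kappa$ keeps the target content parameter $\delta-\kappa p$ positive, while $p>\delta/n$ is Adams' condition for the (fractional) maximal operator to be bounded on the relevant Choquet space. The single genuine obstacle is the estimate for $u_{B}$ in the absence of convexity; it is overcome by the interior tangent ball (equivalently, the uniform fatness of $\Rn\setminus\Omega$), and it is precisely there that the constant acquires its dependence on $r$ and $R$ — the ratio $\lambda$ degenerating only when $R>3r$, i.e.\ for a wide annulus with a small hole.
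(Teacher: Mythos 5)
Your proposal is correct and follows essentially the same route as the paper: the paper deduces the corollary from Theorem~\ref{Hardy-theorem}, whose hypothesis (the outer regularity of the complement, which the annulus satisfies with a constant depending on $n$, $r$, $R$) is exactly the measure-density condition you exploit via the interior tangent ball, and whose proof runs through the same pointwise bound $|u(x)|\le c\,\dist(x,\partial\Omega)^{1-\kappa}M_\kappa|\nabla u|(x)$ followed by Adams' boundedness theorem. The only (immaterial) difference is that you control the average $u_B$ by the Poincar\'e inequality for functions vanishing on a set of positive measure, whereas the paper's Proposition~\ref{Hajlasz_pointwise} uses Haj{\l}asz's Fubini averaging over $B_x\cap\Omega^c$.
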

Whenever $\delta =n$ and $\kappa=0$, a well-known Hardy inequality result  is recovered
\cite{Lewis}, \cite{Wannebo}, \cite[Lemma 3.1]{EH-S}.
We take an agreement that $\delta /0 :=  \infty$.

Poincare-Sobolev  inequalities are 
studied in Section \ref{sJohn}.
One of the results in this section is that the Poincar\'e inequality
formulated  using  Choquet integrals with respect to Hausdorff content   is  valid in an $s$-spire, that is a power cusp.
Whenever $\delta =n$ and
$s\in[1,n/(n-1))$,
Corollary \ref{PoincareCorollary} recovers the classical result  \cite{Mazya}, \cite{MazyaPoborchi}.
\begin{corollary}\label{PoincareCorollary}
Suppose that
$\Omega$ is an $s$-spire in $\Rn$, $n\geq 2$,  
\begin{equation*}
\Omega =\{(x_1, x_2, \dots , x_n)\in (0,1)\times \R^{n-1}\, : \vert\vert (x_2, x_3, \dots ,x_n)\vert\vert < x^s\}\,,
\end{equation*}
where $s\in [1,n/(n-1))$.
Let $\delta\in (0,n]$ be given.
\begin{itemize}
\item[(a)]\, The Poincar\'e inequality.\\
If     $u \in C^1(\Omega)$ and     $p>\delta /n$, then
\begin{equation*}
\inf_{b \in \R} \int_\Omega |u(x)-b|^p \, d \Ha^{\delta}_\infty
\le c  \int_\Omega |\nabla u(x)|^p \, d \Ha^{\delta}_\infty\,,
\end{equation*}
where $c$ is a constant which depends only on $n$, $\delta$, $p$, and  $s$.\\
\item[(b)]\, A weak-type estimate.\\
If $u \in C^1(\Omega)$ and
$p=\delta /n$,  then 
for every $t>0$
\begin{equation*}
\inf_{b \in \R} \Big(\Ha_\infty^{\delta}\bigl(\{x\in\Omega :\vert u(x)-b\vert >t\}\bigr)\Big)^{\frac{n-1}{\delta}}
\le \frac{c}{t^{\frac{1}{s}}} \Big(\int_{\Omega}\vert \nabla u(x)\vert^{\frac{\delta}{n}}\,d\Ha_\infty^{\delta} \Big)^{\frac{n}{s\delta}}\,
\end{equation*}
where $c$ is 
a constant which depends only on $n$, $\delta$, $p$, and $s$.
\end{itemize}

\end{corollary}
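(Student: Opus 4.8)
The plan is to carry out the programme of Section~\ref{sJohn} in the concrete geometry of the spire. First I would show that $\Omega$ is an $s$-John domain, with constants depending only on $n$ and $s$; then I would run the chain argument of Section~\ref{sJohn} along the John curves to bound $u$, up to an additive constant, pointwise by a Riesz potential of $|\nabla u|$; and finally I would feed this into the boundedness properties of the (fractional) Hardy--Littlewood maximal operator on Choquet--Hausdorff spaces, \cite[Theorem 7(a)]{Adams1998}. Parts~(a) and~(b) then differ only in which property is invoked: the strong-type bound on $L^{p}(\Ha^{\delta}_{\infty})$ when $p>\delta/n$, and the weak-type endpoint bound when $p=\delta/n$, the latter followed by a short homogeneity computation.

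For the geometric step, fix $x_{0}=(\tfrac12,0,\dots,0)\in\Omega$. Given $x=(x_{1},x')\in\Omega$ with $x_{1}<\tfrac12$, join $x$ to $x_{0}$ by the curve $\gamma$ that first runs along the segment from $(x_{1},x')$ to $(x_{1},0)$ --- which stays in $\Omega$ because every cross section $\{x_{1}=t\}\cap\Omega$ is a ball --- and then along the axis from $(x_{1},0)$ to $x_{0}$. Parametrising $\gamma$ by arclength $\tau$ from $x$: on the first segment $\dist(\gamma(\tau),\partial\Omega)\ge c(s)\,(x_{1}^{s}-\|x'\|+\tau)\ge c(s)\,\tau\ge c(s)\,\tau^{s}$; on the axial segment, with $\xi$ the first coordinate of $\gamma(\tau)$, one has $\dist(\gamma(\tau),\partial\Omega)\ge c(s)\,\xi^{s}$ and $\tau=\|x'\|+(\xi-x_{1})\le x_{1}^{s}+(\xi-x_{1})\le\xi$, using $x_{1}^{s}\le x_{1}$ since $s\ge1$ and $x_{1}\le1$; hence $\dist(\gamma(\tau),\partial\Omega)\ge c(s)\,\tau^{s}$ along all of $\gamma$. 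Points with $x_{1}\ge\tfrac12$ lie a bounded distance from $\partial\Omega$ and are trivial. Thus $\Omega$ is an $s$-John domain.

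I would then run the chain argument along these curves to obtain, for every $u\in C^{1}(\Omega)$ and every $x\in\Omega$,
\begin{equation*}
|u(x)-u_{B_{0}}|\le c\int_{\Omega}\frac{|\nabla u(y)|}{|x-y|^{s(n-1)}}\,dy,
\end{equation*}
where $u_{B_{0}}$ is the mean value of $u$ over a fixed ball $B_{0}$ with $\overline{B_{0}}\subset\Omega$ centred at $x_{0}$; the right-hand side is a Riesz potential of order $\alpha:=n-s(n-1)\in(0,1]$ precisely because $s<n/(n-1)$, and $b=u_{B_{0}}$ is the choice used in the infima. For~(a), with $p>\delta/n$, split the integral into dyadic annuli about $x$ and sum the geometric series to get $|u(x)-u_{B_{0}}|\le c\,M(|\nabla u|\chi_{\Omega})(x)$; raising to the $p$th power, integrating against $\Ha^{\delta}_{\infty}$, and using the boundedness of $M$ on $L^{p}(\Ha^{\delta}_{\infty})$ for $p>\delta/n$ together with $\inf_{b}\int_{\Omega}|u-b|^{p}\,d\Ha^{\delta}_{\infty}\le\int_{\Omega}|u-u_{B_{0}}|^{p}\,d\Ha^{\delta}_{\infty}$ yields~(a). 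For~(b), with $p=\delta/n$, the strong bound fails; combining the pointwise estimate with the weak-type endpoint bound for the fractional maximal operator gives
\begin{equation*}
\Ha^{\delta}_{\infty}\bigl(\{x\in\Omega:|u(x)-u_{B_{0}}|>t\}\bigr)\le c\,\Bigl(t^{-1}\Bigl(\int_{\Omega}|\nabla u|^{\frac{\delta}{n}}\,d\Ha^{\delta}_{\infty}\Bigr)^{\frac{n}{\delta}}\Bigr)^{\frac{\delta}{\,n-\alpha\,}},
\end{equation*}
and raising this to the power $(n-1)/\delta$ and using $n-\alpha=s(n-1)$ reproduces exactly the inequality in~(b).

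I expect the main obstacle to be the endpoint analysis in part~(b): the chain estimate naturally produces the Riesz potential $I_{\alpha}$, whereas the available Choquet--Hausdorff estimates are phrased for the fractional maximal operator $M_{\alpha}$, and $I_{\alpha}$ is \emph{not} pointwise dominated by $M_{\alpha}$. One must therefore pass from $I_{\alpha}$ to $M_{\beta}$ for $\beta<\alpha$ at the cost of a power of $t$ (a Hedberg-type splitting, now against $\Ha^{\delta}_{\infty}$ rather than Lebesgue measure), let $\beta\uparrow\alpha$, and verify that the exponents still assemble into the sharp content exponent $\delta/(n-\alpha)$ on the left rather than the cruder $\delta/n$ coming from the ordinary maximal operator. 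The geometric step, though elementary, also needs the care shown above near the cusp tip $x_{1}\to0$, where the competition between arclength ($\approx x_{1}$) and width ($\approx x_{1}^{s}$) is exactly what fixes the John exponent at $s$.
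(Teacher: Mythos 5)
Your proposal follows essentially the same route as the paper: it reduces the corollary to the general $s$-John framework by checking that the spire is an $s$-John domain, then uses the pointwise Riesz-potential estimate (Theorem~\ref{pointwise_sJohn}), the boundedness of the maximal operator on $L^p(\Ha^{\delta}_\infty)$ for part~(a) (Theorem~\ref{TheoremPoincare}), and a Hedberg-type splitting against $\Ha^{\delta}_\infty$ combined with the weak-type maximal bound for part~(b) (Lemma~\ref{lem:Choquet-Hedberg} and Theorem~\ref{weaklimit}), arriving at the same final inequalities. The only cosmetic difference is in part~(b), where the paper simply takes $\kappa=0$ in the Hedberg lemma — the interpolation already yields the sharp content exponent $\delta/(s(n-1))$ — so no limiting procedure $\beta\uparrow\alpha$ is needed; your explicit John-curve construction supplies a detail the paper asserts without proof.
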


%%%%%%%%%%%%%%%%%%%%%%%%%%%%%%%%%%%%%%%%%%%%%%%%%%%%%%%%%%%%%%%%%%%%%%%%%%%%%%%%%%%
\section{Hausdorff content and the Choquet integral}\label{Preliminaries}

We recall the definition of the Hausdorff content of a set $E$ in $\Rn$, \cite[2.10.1, p.~169]{Federer},  \cite{Adams1998}, \cite{Adams2015}.
An  open ball centred at $x$ with radius $r>0$ is written as $B(x,r)$.

\begin{definition}[Hausdorff content]\label{defn:Haus}
Let $E$ be a set in $\Rn$, $n \ge 2$. Suppose that
$\delta \in (0, n]$.
The $\delta$-dimensional Hausdorff content of $E$ is defined by
\begin{equation}
\Ha_\infty^{\delta} (E) := \inf \bigg\{ \sum_{i=1}^\infty r_i^{\delta}: E \subset \bigcup_{i=1}^\infty B(x_i, r_i)\bigg\},\label{HausdorffC}
\end{equation}
where the infimum is taken over all  
countable  (or finite ) ball coverings of $E$.
The quantity \eqref{HausdorffC} is also called  the $\delta$-Hausdorff capacity or briefly 
 the Hausdorff content.
\end{definition}
The Hausdorff content has the following properties:
\begin{enumerate}
\item[(H1)] $\Ha_\infty^{\delta}(\emptyset) =0$;
\item[(H2)] if $A \subset B$ then $\Ha_\infty^{\delta}(A) \le \Ha_\infty^{\delta} (B)$;
\item[(H3)]  if $E \subset \Rn$ then 
\[
\Ha_\infty^{\delta}(E) = \inf_{E \subset U \text{ and }U \text{ is open}}\Ha_\infty^{\delta}(U); 
\]
\item[(H4)] if $(K_i)$ is a decreasing sequence of compact sets then 
\[
\Ha_\infty^{\delta}\Big(\bigcap_{i=1}^\infty K_i \Big)
= \lim_{i \to \infty} \Ha_\infty^{\delta}(K_i);
\]
\item[(H5)] if $(A_i)$ is any sequence of sets then
\[
\Ha_\infty^{\delta}\Big(\bigcup_{i=1}^\infty A_i \Big)
\le  \sum_{i=1}^\infty \Ha_\infty^{\delta}(A_i).
\]
\end{enumerate}

Since the Hausdorff content
$\Ha_\infty^{\delta}$
 is  not a capacity in the sense of Choquet \cite{Cho53},  it is good to have a more refined dyadic counterpart of
 $\Ha_{\infty}^{\delta}$, that is
\begin{equation}
\tilde{\Ha}_\infty^{\delta} (E) := \inf \bigg\{ \sum_{i=1}^\infty \ell (Q_i)^{\delta}: E \subset \inte\big(\bigcup_{i=1}^\infty Q_i \big)\bigg\},\label{HausdorffCD}
\end{equation}
where the infimum is taken over all dyadic cube coverings, and  $E$ is a subset of the interior of  the union of cubes.  Here $\ell (Q)$ is the side length of a cube $Q$. This version has been introduced and studied by D.Yang and W. Yuan in \cite{YangYuan08}.
It is known that  $\Ha_\infty^{\delta}(E) $ and $\tilde{\Ha}_\infty^{\delta}(E)$ are comparable to each other  for all sets $E$ in $\Rn$,
that is there are finite, positive constants $c_1(n)$ and $c_2(n)$ such that
\begin{equation*}
c_1(n)\Ha_\infty^{\delta}(E)\le\tilde{\Ha}_\infty^{\delta}(E) \le c_2(n)\Ha_\infty^{\delta}(E)\,,
\end{equation*}
we refer to  \cite[Proposition 2.3]{YangYuan08}.
By \cite[Theorem 2.1]{YangYuan08} $\tilde{\Ha}_\infty^{\delta}$ is a Choquet capacity for all $0<\delta \le n$.
By \cite[Proposition 2.4]{YangYuan08}  we have
\[
\tilde{\Ha}_\infty^{\delta}(A_1 \cup A_2) + \tilde{\Ha}_\infty^{\delta}(A_1 \cup A_2) \le \tilde{\Ha}_\infty^{\delta}(A_1) + \tilde{\Ha}_\infty^{\delta}(A_2)
\] 
for all $A_1, A_2 \subset \Rn$. Hence, $\tilde{\Ha}_\infty^{\delta}$ is a strongly subadditive Choquet capacity for all $0<\delta \le n$. 

The missing  property of  $\Ha_\infty^{\delta}$ is valid for $\tilde{\Ha}_\infty^{\delta}$ : 
\newline
(H6)  If $(E_i)$ is an increasing sequence of  sets, then 
\begin{equation*}
\tilde{{\Ha}}_\infty^{\delta}\big(\bigcup_{i=1}^\infty E_i \big)
= \lim_{i \to \infty} \tilde{{\Ha}}_\infty^{\delta}(E_i).
\end{equation*}

Recently, properties of Hausdorff content have been studied extensively
\cite{DX},
\cite{YangYuan08}, \cite{Tang},  \cite{SaitoTanakaWatanabe2016}, \cite{Liu16}, 
\cite{SaitoTanakaWatanabe2019}, \cite{Saito2019}, \cite{SaitoTanaka2022}, \cite{PonceSpector23},
\cite{ChenSpector2023}.

Let $0 <\delta\le n$. The definition of the $\delta$-dimensional  Hausdorff measure for $E \subset \R^n$ is
\[
\Ha^\delta (E) := \lim_{\rho \to 0^+}  \inf \bigg\{ \sum_{i=1}^\infty r_i^{\delta}: E \subset \bigcup_{i=1}^\infty B(x_i, r_i) \text{ and } r_i \le \rho \text{ for all } i\bigg\},
\]
where the infimum is taken over 
 all such countable  (or finite) ball coverings of $E$ that the radius of a  ball is at most $\rho$.
 For all sets $E\subset\R^n$,
 \begin{equation*}
 {\Ha}_{\infty}^{n} (E)\le\Ha^{n} (E) \le
 c(n) {\Ha}_{\infty}^{n} (E),
 \end{equation*}
 \cite[Proposition 2.5]{HH-S_JFA}. Thus there are constants $c_1(n)$ and $c_2(n)$ such that
 \begin{equation*}
 c_1(n) {\Ha}_{\infty}^{n} (E)\le
 |E| \le
 c_2(n) \Ha^{n}_\infty (E),
 \end{equation*}
 for any  Lebesgue measurable  set $E \subset \Rn$.
Here $|E|$ is the Lebesgue measure of $E$.

 We recall the definition of the Choquet integral.
 In the present paper $\Omega$  is always assumed to be 
  an open, connected set 
 in $ \Rn$, $n \ge 2$,   unless stated otherwise, and  hence it  is called a domain.
 \begin{definition}[Choquet integral]
For any function $f:\Omega\to [0,\infty]$ the integral in the sense of Choquet with respect to Hausdorff content is defined by
\begin{equation}\label{IntegralDef}
\int_\Omega f(x) \, d \Ha^{\delta}_\infty := \int_0^\infty \Ha^{\delta}_\infty\big(\{x \in \Omega : f(x)>t\}\big) \, dt. 
\end{equation}
\end{definition}
The right-hand side is well defined as a Lebesgue integral.
For the properties of Choquet integrals we refer to
\cite{Adams1986}, 
\cite{Adams1998}, \cite{Den94}, \cite{OV}, \cite{DX},  \cite{AX2012}, \cite{Adams2015}, \cite{Kawabe19},
\cite{HH-S_AAG}, \cite{HH-S_JFA}, \cite{PonceSpector23}.

For the convenience of the reader we list here the basic properties of
the Choquet integral with respect to Hausdorff content.
\begin{enumerate}
\item[(C1)] $ \displaystyle \int_\Omega a f(x) \, d \Ha^{\delta}_\infty = a \int_\Omega  f(x) \, d \Ha^{\delta}_\infty$ for every $a\ge 0$;
\item[(C2)] $\displaystyle \int_\Omega f(x) \, d \Ha^{\delta}_\infty=0$ if and only if $f(x)=0$  for $\Ha^{\delta}_\infty$-almost every $x\in \Omega$;
\item[(C3)] $\displaystyle \int_\Omega \chi_E(x) \, d \Ha^{\delta}_\infty = \Ha^{\delta}_\infty(\Omega \cap E)$;
\item[(C4)] if $A\subset B$, then $\int_A f(x) \, d \Ha^{\delta}_\infty \le \int_B f(x) \, d \Ha^{\delta}_\infty$;
\item[(C5)] if $0\le f\le g$, then $\displaystyle \int_\Omega f(x) \, d \Ha^{\delta}_\infty \le \int_\Omega g(x) \, d \Ha^{\delta}_\infty$;
\item[(C6)] $\displaystyle \int_\Omega f(x)+g(x) \, d \Ha^{\delta}_\infty \le 2\Big(\int_\Omega f(x) \, d \Ha^{\delta}_\infty + \int_\Omega g(x) \, d \Ha^{\delta}_\infty\Big)$;
\item[(C7)] $\displaystyle \int_\Omega f(x)g(x) \, d \Ha^{\delta}_\infty \le 2\Big(\int_\Omega f(x)^p \, d \Ha^{\delta}_\infty\Big)^{1/p} \Big( \int_\Omega g(x)^q \, d \Ha^{\delta}_\infty\Big)^{1/q}$   when   $p, q>1$
are H\"older conjugates, that is $\frac{1}{p}+\frac{1}{q}=1$.
\end{enumerate}
For the proofs of these properties  we refer to \cite{Adams1998} and \cite[Chapter 4]{Adams2015} .

We note  that by changing of  the variables, $t^{1/p} = \lambda$ yields 
\[
\begin{split}
\int_0^\infty \Ha^{\delta}_\infty\big(\{x \in \Omega : f(x)^p>t\}\big) \, dt &=
\int_0^\infty \Ha^{\delta}_\infty\big(\{x \in \Omega : f(x)>t^{1/p}\}\big) \, dt\\
&= \int_0^\infty p \lambda^{p-1 }\Ha^{\delta}_\infty\big(\{x \in \Omega : f(x)>\lambda\}\big) \, d\lambda.
\end{split}
\]

\begin{remark}
Since $\Ha^{n}_\infty$ and Lebesgue measure are comparable for all measurable sets, there exist constants $c_1(n)$ and $c_2(n)$ such that
\begin{equation*}
c_1(n)  \int_\Omega |f(x)|  \, d \Ha^{ n}_\infty \le \int_\Omega |f(x)| \, dx \le c_2(n)  \int_\Omega |f(x)|  \, d \Ha^{ n}_\infty
\end{equation*}
\end{remark}

We recall a useful lemma from  \cite[Lemma 2.2]{ChenOoiSpector2023} and \cite[Proposition 2.3]{HH-S_La}.

\begin{lemma}\label{GeneralizationOV}
 Let $\Omega$ be an open subset of $\Rn$. If  $0 < \delta_1 \le n$ and $0 < \delta_2 \le n$ such that $\delta_1 <\delta_2$, then
 the  inequality
\begin{equation*}
\int_\Omega |f(x)|\Ha_{\infty}^{\delta_2} \, dx 
\le 
 \frac{\delta_2}{\delta_1} \Big(\int_\Omega |f(x)|^{\frac{\delta_1}{\delta_2}} \, d \Ha^{\delta_1}_\infty \Big)^{\frac{\delta_2}{\delta_1}}
\end{equation*}
holds for all measurable  functions $f: \Omega \to [-\infty, \infty]$.
\end{lemma}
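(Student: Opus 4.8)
The plan is to split the estimate into two parts: a set-level comparison of the two Hausdorff contents, and a one-dimensional Hardy-type inequality applied to a distribution function. Put $p:=\delta_2/\delta_1>1$. The first step is to show that
\[
\Ha_\infty^{\delta_2}(E)\le\bigl(\Ha_\infty^{\delta_1}(E)\bigr)^{p}\qquad\text{for every }E\subset\Rn.
\]
Given any countable ball cover $E\subset\bigcup_i B(x_i,r_i)$, the elementary inequality $\sum_i a_i^{p}\le\bigl(\sum_i a_i\bigr)^{p}$ for $a_i\ge 0$ and $p\ge 1$ (the embedding $\ell^1\hookrightarrow\ell^{p}$) gives $\Ha_\infty^{\delta_2}(E)\le\sum_i r_i^{\delta_2}=\sum_i(r_i^{\delta_1})^{p}\le\bigl(\sum_i r_i^{\delta_1}\bigr)^{p}$; taking the infimum over all such covers, which commutes with the increasing map $t\mapsto t^{p}$, yields the claim.

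Next I would reduce the Choquet inequality to one variable. We may replace $f$ by $|f|$, so assume $f\ge 0$, and assume the right-hand side is finite. Set $h:=f^{\delta_1/\delta_2}$, so $f=h^{p}$, and let $\nu(\lambda):=\Ha_\infty^{\delta_1}\bigl(\{x\in\Omega: h(x)>\lambda\}\bigr)$. By (H2) the function $\nu$ is nonnegative and non-increasing, and by the layer-cake definition of the Choquet integral $\int_0^\infty\nu(\lambda)\,d\lambda=\int_\Omega h\,d\Ha_\infty^{\delta_1}=\int_\Omega f^{\delta_1/\delta_2}\,d\Ha_\infty^{\delta_1}$. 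Writing $\int_\Omega f\,d\Ha_\infty^{\delta_2}=\int_\Omega h^{p}\,d\Ha_\infty^{\delta_2}$ by the layer-cake formula, performing the change of variables $t=\lambda^{p}$ recorded just above (with $h$ in place of $f$ there), and using the set comparison from the first step, we obtain
\[
\int_\Omega f\,d\Ha_\infty^{\delta_2}=\int_0^\infty p\lambda^{p-1}\,\Ha_\infty^{\delta_2}\bigl(\{h>\lambda\}\bigr)\,d\lambda\le\int_0^\infty p\lambda^{p-1}\,\nu(\lambda)^{p}\,d\lambda.
\]

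Now comes the core one-dimensional estimate. Let $F(\lambda):=\int_0^\lambda\nu(s)\,ds$. Since $\nu$ is non-increasing, $\lambda\nu(\lambda)\le F(\lambda)$, hence $\lambda^{p-1}\nu(\lambda)^{p-1}\le F(\lambda)^{p-1}$ and therefore, for a.e.\ $\lambda>0$,
\[
p\lambda^{p-1}\nu(\lambda)^{p}\le p\,\nu(\lambda)F(\lambda)^{p-1}=\bigl(F(\lambda)^{p}\bigr)'.
\]
Because the right-hand side of the lemma is finite, $F$ is finite, non-decreasing and locally absolutely continuous, and since $t\mapsto t^{p}$ is locally Lipschitz, so is $F^{p}$; integrating and using $F(0)=0$, together with monotone convergence to pass to the limit $\lambda\to\infty$, gives $\int_0^\infty p\lambda^{p-1}\nu(\lambda)^{p}\,d\lambda\le F(\infty)^{p}=\bigl(\int_\Omega f^{\delta_1/\delta_2}\,d\Ha_\infty^{\delta_1}\bigr)^{p}$. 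Combining this with the previous display and noting $p=\delta_2/\delta_1>1$ yields the assertion, in fact with constant $1$ in place of $\delta_2/\delta_1$.

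The genuinely analytic point — the only place where anything beyond bookkeeping happens — is this last step: establishing the monotonicity bound $\lambda\nu(\lambda)\le F(\lambda)$ and justifying the fundamental theorem of calculus for $F^{p}$. The set comparison and the passage to the distribution function are routine manipulations with ball coverings and the layer-cake representation already recalled in Section~\ref{Preliminaries}.
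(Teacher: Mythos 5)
Your proof is correct. Note first that the paper does not actually prove this lemma --- it only recalls it from the cited references --- so the relevant comparison is with the argument those references use, which is visible in the constant $\delta_2/\delta_1$: one writes $p=\delta_2/\delta_1$, bounds $\Ha_\infty^{\delta_2}(\{f>t\})\le\Ha_\infty^{\delta_1}(\{f>t\})^{p}$ exactly as in your first step, and then controls $\nu(\lambda)^{p}$ by the Chebyshev-type bound $\lambda\,\nu(\lambda)\le F(\infty)$, i.e.\ $\nu(\lambda)^{p-1}\le(F(\infty)/\lambda)^{p-1}$; pulling the constant $F(\infty)^{p-1}$ out of the integral and using $\int_0^\infty\nu=F(\infty)$ produces the factor $p=\delta_2/\delta_1$. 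You instead use the pointwise refinement $\lambda\,\nu(\lambda)\le F(\lambda)$ together with the identity $p\,\nu F^{p-1}=(F^{p})'$ a.e.\ and the fundamental theorem of calculus for the locally absolutely continuous function $F^{p}$; this is a genuine sharpening that eliminates the constant entirely, giving the inequality with $1$ in place of $\delta_2/\delta_1$ (consistent with the case $f=\chi_E$, where your set-level comparison already gives equality of the two sides up to the exponent). All the supporting details are in order: the $\ell^1\hookrightarrow\ell^{p}$ step and the commutation of the infimum with $t\mapsto t^{p}$ are valid, the finiteness of the right-hand side forces $\nu(\lambda)<\infty$ for every $\lambda>0$ and hence the local absolute continuity of $F$ and of $F^{p}$, and the reduction to the case of a finite right-hand side loses nothing. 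The only cosmetic remark is that measurability of $f$ plays no role, since Hausdorff content is defined on arbitrary sets; your argument proves the lemma for all $f:\Omega\to[-\infty,\infty]$.
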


For more results on Choquet integrals with minimal assumptions on the monotone set function through which they are defined we refer to 
\cite{PonceSpector23}.

Let $\kappa \in [0, n)$.
If  $f \in L^1_{\loc}(\Rn)$, then the fractional centred Hardy-Littlewood maximal function of $f$ is  written as 
\[
M_\kappa f(x) := \sup_{r>0} r^{\kappa-n} \int_{B(x, r)} |f(y)| \, dy.
\]
 The non-fractional  centred maximal function $M_0 f$ is written as $Mf$.
If $f$ is defined only on $\Omega$, then 
we agree that $f(x)=0$ for all $x\in\Rn\backslash\Omega$
 in  the definition of $M_\kappa $.

Adams showed that the fractional maximal function is bounded, \cite[Theorem 7(a)]{Adams1998}.
 %We take an agreement that  $\delta /0:=\infty$. %Tämä on jo johdannossa.

\begin{theorem}[Theorem 7(a) of \cite{Adams1998}]\label{thm:fractional-maximal-function}
Suppose that $\delta \in (0, n]$ and $\kappa \in [0, n)$.  If $p \in (\delta/n, \delta /\kappa)$,
then there exists a constant $c$ depending only on $n$, $\delta$, $\kappa$, and $p$ such that
 for every $f \in L^1_{\loc}(\Rn)$ we have
\[  
\int_\Rn (M_\kappa f(x))^p \, d \Ha^{\delta-\kappa p}_\infty \le c \int_\Rn |f(x)|^p \, d \Ha^{\delta}_\infty.
\]
\end{theorem}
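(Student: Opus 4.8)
This is \cite[Theorem~7(a)]{Adams1998}; we sketch the structure of the argument. The idea is to pass to a dyadic model whose underlying set function is a bona fide Choquet capacity, prove a weak-type inequality by a Calder\'on--Zygmund stopping-time decomposition, and then sum the level sets. Since $\Ha_\infty^{\delta}$ and $\Ha_\infty^{\delta-\kappa p}$ are comparable to the dyadic contents $\tilde{\Ha}_\infty^{\delta}$ and $\tilde{\Ha}_\infty^{\delta-\kappa p}$, and since every Euclidean ball $B(x,r)$ lies inside a cube of side length comparable to $r$ drawn from one of finitely many (depending only on $n$) shifted dyadic lattices, it suffices to prove
\[
\int_\Rn \big( M^{\mathrm d}_\kappa f(x) \big)^p \, d\tilde{\Ha}^{\delta-\kappa p}_\infty \le c \int_\Rn |f(x)|^p \, d\tilde{\Ha}^{\delta}_\infty ,
\]
where $M^{\mathrm d}_\kappa f(x) = \sup_{Q\ni x} \ell(Q)^{\kappa-n}\int_Q |f(y)|\,dy$, the supremum running over dyadic cubes. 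The gain is that $\tilde{\Ha}^{\delta-\kappa p}_\infty$ is a strongly subadditive Choquet capacity, so the associated Choquet integral is subadditive over disjoint sets and obeys the layer-cake formula, which the summation step needs.

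For the weak-type estimate, fix $t>0$ and let $\{Q_j\}$ be the maximal dyadic cubes for which $\ell(Q_j)^{\kappa-n}\int_{Q_j}|f|\,dy>t$. They are pairwise disjoint, $\{ M^{\mathrm d}_\kappa f>t \} = \bigcup_j Q_j$, and because the dyadic parent of $Q_j$ fails the defining inequality, $\int_{Q_j}|f|\,dy$ is comparable to $t\,\ell(Q_j)^{n-\kappa}$. Consequently
\[
\tilde{\Ha}^{\delta-\kappa p}_\infty\big( \{ M^{\mathrm d}_\kappa f>t \} \big) \le \sum_j \ell(Q_j)^{\delta-\kappa p},
\]
and each term is estimated by combining the bound $\int_{Q_j}|f|\,dy \le c\,t\,\ell(Q_j)^{n-\kappa}$ with the comparison between the Lebesgue integral and the $\Ha_\infty^{n}$-Choquet integral, Lemma~\ref{GeneralizationOV}, and the H\"older inequality (C7), so as to bring $\int_{Q_j}|f|^{p}\,d\tilde{\Ha}^{\delta}_\infty$ back onto the right-hand side. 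The hypotheses $\delta/n<p<\delta/\kappa$ enter exactly here: they ensure $\delta-\kappa p>0$ and put the powers to which the cube averages are raised into the admissible range.

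It remains to integrate the level-set bounds against $p\,t^{p-1}\,dt$. Since Choquet integrals are only subadditive, one replaces the classical Marcinkiewicz argument by the usual truncation $f = f\chi_{\{|f|>\lambda t\}} + f\chi_{\{|f|\le\lambda t\}}$, now with an additional spatial truncation forced by $\kappa>0$ (for $\kappa>0$ the fractional maximal function of a bounded compactly supported function is still controlled, but not by a constant), using the strong subadditivity of $\tilde{\Ha}^{\delta-\kappa p}_\infty$ in place of additivity. The genuinely delicate step, and the reason the argument is not mere bookkeeping, is the final re-summation of $\sum_j \ell(Q_j)^{\delta-\kappa p}$ into a single Choquet integral $\int_\Rn |f|^p\,d\tilde{\Ha}^{\delta}_\infty$: because $\delta-\kappa p$ is strictly smaller than $n$, disjointness of the $Q_j$ is not by itself enough, and one must use the comparability of $\Ha_\infty^{\delta}$ with its dyadic counterpart (equivalently, the description of the $\Ha_\infty^{\delta}$-Choquet integral via Frostman-type measures) to close it; the complete argument is in \cite[Theorem~7]{Adams1998} and \cite[Chapter~4]{Adams2015}. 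When $\delta=n$ everything collapses to the classical Fefferman--Stein fractional maximal inequality, since then the $\Ha_\infty^{n}$-Choquet integrals on both sides are comparable to Lebesgue integrals.
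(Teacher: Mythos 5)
This statement is imported verbatim from \cite[Theorem 7(a)]{Adams1998}: the paper offers no proof of its own beyond the citation, so there is no internal argument to compare yours against. Judged on its own terms, your sketch reconstructs the standard route plausibly, and the arithmetic at its core does close: from $t\,\ell(Q_j)^{n-\kappa}\le c\int_{Q_j}|f|\,dy$, Lemma~\ref{GeneralizationOV} (with $\delta_2=n$) and H\"older's inequality (C7) with exponent $np/\delta>1$ one indeed gets $\ell(Q_j)^{\delta-\kappa p}\le c\,t^{-p}\int_{Q_j}|f|^p\,d\tilde{\Ha}^{\delta}_\infty$, and it is exactly the hypothesis $\delta/n<p<\delta/\kappa$ that makes the exponents work and keeps $\delta-\kappa p>0$.

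However, as a substitute for the citation the proposal is not a proof: the two steps that carry the real content are named but not executed. First, the re-summation $\sum_j\int_{Q_j}|f|^p\,d\tilde{\Ha}^{\delta}_\infty\le c\int_{\Rn}|f|^p\,d\tilde{\Ha}^{\delta}_\infty$ over the disjoint stopping cubes does not follow from anything you invoke; note that strong subadditivity and property (H5) give inequalities in the \emph{opposite} direction (a union is dominated by the sum, not the sum by the union), so ``subadditivity over disjoint sets'' is not the tool here --- one needs a quasi-additivity statement for the dyadic content over disjoint dyadic cubes, which is precisely the delicate point in \cite{OV} and \cite{Adams1998}, and you defer it back to those references. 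Second, the weak-to-strong passage for $\kappa>0$ is only gestured at: the truncation $f=f\chi_{\{|f|>\lambda t\}}+f\chi_{\{|f|\le\lambda t\}}$ fails in its naive form because $M_\kappa$ of a bounded function is not bounded, and the ``additional spatial truncation'' that repairs this is exactly the part you leave out. So the sketch is a fair guide to where the difficulties lie, but since it ultimately resolves them by citing \cite{Adams1998}, it does the same work as the paper's one-line attribution and should not be presented as a self-contained proof.
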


%%%%%%%%%%%%%%%%%%%%%%%%%%%%%%%%%%%%%%%%%%%%%%%%%%%%%%%%%%%%%%%%%%%%%%%%%%%%%%%%%%%
%%%%%%%%%%%%%%%%%%%%%%%%%%%%%%%%%%%%%%%%%%%%%%%%%%%%%%%%%%%%%%%%%%%%%%%%%%%%%%%%%%%
%%%%%%%%%%%%%%%%%%%%%%%%%%%%%%%%%%%%%%%%%%%%%%%%%%%%%%
%%%%%%%%%%%%%%%%%%%%%%%%%%%%%%%%%%%%%%%%%%%%%%%%%%%%%%%%%%%%%%%%%%%%

\section{On the  Hardy inequality}\label{Section-Hardy}

When $\Omega$ is a bounded, open, connected 
set in the Euclidean $n$-space with some regularity conditions,
we prove the following Hardy-type inequality for compactly supported, continuously differentiable  functions $u$ defined on  $\Omega$. 
Whenever $\delta \in (0, n]$, $\kappa \in [0, 1)$, and $p\in(\delta /n, \delta /\kappa )$ there exists a constant depending only on 
$n$, $\delta$, $\kappa$, $p$, and
the domain $\Omega$ such that
the inequality
\begin{equation*}
\int_{\Omega}\frac{\vert u(x)\vert^p}{\dist (x,\partial \Omega)^{p(1-\kappa )}}\,d\Ha_{\infty}^{\delta- \kappa p}\le
c\int_{\Omega}\vert \nabla u(x)\vert^p\,d\Ha_{\infty}^{\delta}
\end{equation*}
holds 
for all $u\in C^{1}_0(\Omega )$, that is continuously differentiable functions on $\Omega$ with compact support.
Whenever $\delta =n$ and $\kappa =0$, this inequality recovers the classical Hardy inequality for domains with certain regularity properties.

We recall the notion of outer regularity of a set.

\begin{definition}
Let $\Omega$ be an open, proper subset of  $\Rn$ and $\Rn\backslash\Omega=:\Omega^c$ its complement with respect to $\Rn$.
If there exists  a constant $b>0$ 
such that for every $y\in\partial \Omega$ and all $r>0$,
\begin{equation}\label{plump}
\vert B(y,r)\cap \Omega^c\vert\geq b\vert B(y,r)\vert\,,
\end{equation}
then  $\Omega$   has an outer regularity property.
\end{definition}

Examples of domains which have an outer regularity property are  domains with  a plump complement i.e. if there exists $\sigma>0$ such that for all $y\in\partial \Omega$ and all $t\in (0,\sigma ]$, there is an 
$x\in (\Rn\backslash\Omega )\cap \overline{B(y,t)}$ with $\dist (x, \partial \Omega )\geq bt$. See also Figure~\ref{fig:outer-regularity}.

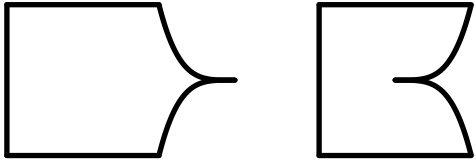
\begin{figure}
\begin{tikzpicture}[line cap=round,line join=round,>=triangle 45,x=1.0cm,y=1.0cm]
\clip(-2.7,-0.6) rectangle (3.8,2.2);
\draw [line width=2.pt] (0.,2.)-- (2.,2.);
\draw [line width=2.pt] (2.,0.)-- (0.,0.);
\draw [line width=2.pt] (0.,0.)-- (0.,2.);
\draw[line width=2.pt,smooth,samples=100,domain=2:3] plot(\x,{((\x)-3)^(4)+1});
\draw[line width=2.pt,smooth,samples=100,domain=2:3] plot(\x,{0-(((\x)-3)^(4)+1)+2});
\end{tikzpicture}
\begin{tikzpicture}[line cap=round,line join=round,>=triangle 45,x=1.0cm,y=1.0cm]
\clip(-0.2,-0.6) rectangle (3.0,2.2);
\draw [line width=2.pt] (0.,2.)-- (2.,2.);
\draw [line width=2.pt] (2.,0.)-- (0.,0.);
\draw [line width=2.pt] (0.,0.)-- (0.,2.);
\draw[line width=2.pt,smooth,samples=100,domain=1:2] plot(\x,{((\x)-1)^(4.0)+1});
\draw[line width=2.pt,smooth,samples=100,domain=1:2] plot(\x,{0-(((\x)-1)^(4.0)+1)+2});
\end{tikzpicture}
\caption{The left-hand side domain satisfies the outer regularity condition, but the right-hand  side one does not.}\label{fig:outer-regularity}
\end{figure}

The proof for the  Hardy inequality  relies on 
a pointwise inequality by P.\ Haj{\l}asz \cite[Proposition 1]{Hajlasz1999}.
We modify his proof by using the fractional maximal operator. This idea
 of  using the fractional maximal operator  goes back to  Adams \cite{Adams1975}.

\begin{proposition}
Let $\Omega$ be an open and proper subset of $\Rn$, that has the outer regularity property with a constant $b$.
Then for every $u \in C^1_0(\Omega)$, we have
\begin{equation}\label{Hajlasz_pointwise}
\vert u(x)\vert \le c(n, b, \kappa)\dist(x, \partial \Omega)^{1-\kappa} M_\kappa |\nabla u|(x)
\end{equation}
for all $x\in\Omega$.
\end{proposition}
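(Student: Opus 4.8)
The plan is to establish the pointwise bound \eqref{Hajlasz_pointwise} by combining two ingredients: a standard integral representation of $u(x)$ along rays (or via the Riesz potential) against $|\nabla u|$, and the outer regularity of $\Omega$, which lets us trade a ball of radius $\approx \dist(x,\partial\Omega)$ for the maximal function. First I would fix $x \in \Omega$, set $d := \dist(x,\partial\Omega)$, and choose a boundary point $y \in \partial\Omega$ with $|x-y| = d$. Since $u \in C^1_0(\Omega)$, we may extend $u$ by zero to all of $\Rn$, and the extension is still $C^1$ on a neighbourhood of $\overline{B(y, 2d)}$ in the sense that it vanishes identically on $\Omega^c$. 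The key observation is that on the ball $B := B(y, 2d)$, which contains $x$, the average of $u$ is small because a fixed proportion $b$ of $B$ lies in $\Omega^c = \{u = 0\}$: more precisely, following Haj{\l}asz, one writes $u(x)$ as a telescoping or direct estimate comparing $u(x)$ to its integral average over $B$, where the average is controlled by $\fint_B |\nabla u|$ times $\diam(B)$ via a Poincar\'e-type inequality, plus the average $\fint_B |u|$, which in turn is at most $\frac{1}{|B|}\int_{B\cap\Omega}|u|$; but then one re-expresses the whole thing so that only $|\nabla u|$ appears, exploiting that $u$ vanishes on a set of measure $\geq b|B|$ inside $B$ (this is exactly the role of the outer regularity constant $b$).

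The cleanest route is the one Haj{\l}asz uses in \cite[Proposition 1]{Hajlasz1999}: for a.e.\ $x$ in a ball $B$ one has the pointwise inequality $|u(x) - u_B| \le c(n)\, I_1(\chi_B |\nabla u|)(x)$ where $I_1$ is the Riesz potential of order $1$ and $u_B = \fint_B u$. Combining this with $B = B(y,2d)$ and the estimate $|u_B| = \bigl|\fint_B u\bigr| \le \frac{1}{|B\cap\Omega^c|}\int_{B\cap\Omega}\bigl|u - u_B\bigr|\,$ \,— here using $\int_{B\cap\Omega^c}(u - u_B) = -u_B|B\cap\Omega^c|$ together with $\int_B(u-u_B)=0$, and then $|B\cap\Omega^c|\ge b|B|$ — we get $|u_B| \le \frac{c}{b}\fint_B |u - u_B| \le \frac{c}{b}\fint_B I_1(\chi_B|\nabla u|)$. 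Therefore $|u(x)| \le |u(x) - u_B| + |u_B| \le c(n,b)\bigl(I_1(\chi_B|\nabla u|)(x) + \fint_B I_1(\chi_B|\nabla u|)\bigr)$. Now I bound each Riesz potential restricted to $B$ by the fractional maximal function: a routine dyadic annular decomposition of $B(y,2d)$ around the relevant centre shows $I_1(\chi_B g)(z) \le c(n)\, (4d)^{1-\kappa}\, M_\kappa g(z)$ for any $z \in B$ and any $g \ge 0$, because the extra radius factor $r^{1-\kappa}$ is uniformly bounded by $(4d)^{1-\kappa}$ on the scales that intersect $B$. Taking $z = x$ for the first term and averaging over $z \in B$ for the second (and using that $M_\kappa |\nabla u|$ is comparable at nearby points up to the same $d$-scale, or simply that $M_\kappa|\nabla u|(z) \le c(n)\, M_\kappa|\nabla u|(x)$ for $z \in B(y,2d)$ with $x$ also in that ball, since any ball around $z$ is contained in a comparable ball around $x$) yields $|u(x)| \le c(n,b,\kappa)\, d^{\,1-\kappa}\, M_\kappa|\nabla u|(x)$, which is \eqref{Hajlasz_pointwise}.

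I expect the main technical obstacle to be the passage from the Riesz potential $I_1(\chi_B|\nabla u|)$ to $d^{1-\kappa} M_\kappa|\nabla u|$ with constants depending only on $n$, $b$, $\kappa$ and \emph{not} on $d$: one must check that the geometric series $\sum_j (2^{-j}d)^{1-\kappa}$ that appears in the annular decomposition is summable precisely because $1-\kappa > 0$ (here $\kappa \in [0,1)$ is essential), and that the localization to $B$ is what keeps the series from the other side ($\sum_{j\le 0}$) finite. A secondary point requiring care is the comparability $M_\kappa|\nabla u|(z) \simeq_n M_\kappa|\nabla u|(x)$ for $z$ and $x$ both within distance $2d$ of $y$; this follows from the doubling of Lebesgue measure and the fact that a ball $B(z,r)$ is contained in $B(x, r + 4d)$ with $r + 4d \le 2r$ once $r \ge 4d$, while for $r < 4d$ one can enlarge to scale $4d$ at the cost of a dimensional constant — the $\kappa$-homogeneity factor $r^{\kappa-n}$ behaves well under this enlargement since $\kappa - n < 0$. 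Everything else is bookkeeping: handling the boundary-zero extension, verifying the Poincar\'e/Riesz pointwise inequality on a ball (which is classical, e.g.\ via the fundamental theorem of calculus along segments and Fubini), and collecting the constants.
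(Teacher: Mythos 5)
Your decomposition $|u(x)|\le|u(x)-u_B|+|u_B|$ with $B=B(y,2d)$ centred at a nearest boundary point, and the bound $|u_B|\le \tfrac1b\fint_B|u-u_B|$ coming from the fact that $u$ vanishes on $B\cap\Omega^c$ with $|B\cap\Omega^c|\ge b|B|$, is the classical Haj{\l}asz route and is a perfectly legitimate variant of what the paper does (the paper instead writes $|u(x)|=|u(x)-u(y)|\le|u(x)-u_{B_x}|+|u(y)-u_{B_x}|$ for $y\in B_x\cap\Omega^c$ with $B_x=B(x,2\dist(x,\partial\Omega))$, and then averages over such $y$ using Fubini and outer regularity; the two decompositions are interchangeable). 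The subrepresentation formula and the final dyadic-annuli conversion of $\int_{B}|\nabla u(w)|\,|x-w|^{1-n}\,dw$ into $c\,d^{1-\kappa}M_\kappa|\nabla u|(x)$ are also exactly as in the paper. However, one step is justified incorrectly: the claimed pointwise comparability $M_\kappa|\nabla u|(z)\le c(n)\,M_\kappa|\nabla u|(x)$ for $z,x$ in a common ball of radius $\sim d$ is \emph{false}. For radii $r<4d$ your enlargement goes the wrong way: since $\kappa-n<0$ one has $r^{\kappa-n}\ge (Cd)^{\kappa-n}$, so $r^{\kappa-n}\int_{B(z,r)}|g|$ is not dominated by a constant times $(Cd)^{\kappa-n}\int_{B(x,Cd)}|g|$. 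Concretely, if $|g|$ is concentrated in $B(z,\ve)$ with $\ve\ll|x-z|$, then $M_\kappa g(z)\gtrsim \ve^{\kappa-n}\int|g|$ while $M_\kappa g(x)\lesssim |x-z|^{\kappa-n}\int|g|$, and the ratio blows up as $\ve\to0$. So the term $\fint_B I_1(\chi_B|\nabla u|)$ cannot be handled by ``transporting'' the maximal function from $z$ to $x$.

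The repair is short and is precisely the Fubini computation the paper uses for its own averaged term (via \cite[Lemma 3.11.3]{Ziemer89}):
\[
\fint_B\int_B\frac{|\nabla u(w)|}{|z-w|^{n-1}}\,dw\,dz=\int_B|\nabla u(w)|\,\fint_B|z-w|^{1-n}\,dz\,dw\le \frac{c(n)\,d}{|B|}\int_B|\nabla u(w)|\,dw\le c(n)\,d^{\,1-n}\int_B|\nabla u(w)|\,dw,
\]
because $\int_B|z-w|^{1-n}\,dz\le\int_{B(w,4d)}|z-w|^{1-n}\,dz=c(n)d$. Since $|x-w|\le 3d$ for every $w\in B(y,2d)$, one has $d^{\,1-n}\le 3^{n-1}|x-w|^{1-n}$, so the right-hand side is at most $c(n)\int_B|\nabla u(w)|\,|x-w|^{1-n}\,dw$ --- the same Riesz potential centred at $x$ that controls your first term. (Equivalently, $d^{\,1-n}\int_B|\nabla u|\le c\,d^{\,1-\kappa}(3d)^{\kappa-n}\int_{B(x,3d)}|\nabla u|\le c\,d^{\,1-\kappa}M_\kappa|\nabla u|(x)$ directly.) With this substitution your argument closes and yields \eqref{Hajlasz_pointwise} with a constant depending only on $n$, $b$, and $\kappa$; the rest of your write-up, including the role of $1-\kappa>0$ in summing the geometric series, is correct.
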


\begin{proof}
Let $u \in C^1_0(\Omega)$. Fix $x \in \Omega$, and let $x^{*}\in\partial\Omega$ such that $\vert x-x^{*}\vert=\dist (x,\partial \Omega )$.
We write $B_x:= B(x, 2|x-x^*|)$. Then  \cite[Lemma 7.16]{GT}  yields     for every $y \in B_x \cap \Omega^c$ 
\[
\begin{split}
|u(x)| &= |u(x) - u(y)| \le |u(x) - u_{B_x}| + |u(y)-u_{B_x}|\\
&\le \int_{B_x} \frac{|\nabla u(z)|}{|x-z|^{n-1}} \, dz + \int_{B_x} \frac{|\nabla u(z)|}{|y-z|^{n-1}} \, dz\,.
\end{split}
\]
Next we show that
\begin{equation}\label{ovela}
\inf_{y \in {B_x} \cap \Omega^c} \int_{B_x} \frac{|\nabla u(z)|}{|y-z|^{n-1}} \, dz
\le c(n, b) \int_{B_x} \frac{|\nabla u(z)|}{|x-z|^{n-1}} \, dz.
\end{equation}
By Fubini's theorem 
\begin{equation*}
\fint_{B_x \cap \Omega^c}\int_{B_x} \frac{|\nabla u(z)|}{|y-z|^{n-1}} \, dz \, dy
=\int_{B_x}  \fint_{B_x \cap \Omega^c}\frac{|\nabla u(z)|}{|y-z|^{n-1}}  \, dy \, dz\,.
\end{equation*}
Since $B(x^*, \vert x-x^* \vert )\cap\Omega^c) \subset B_x\cap \Omega^c$,
the  outer regularity assumption can be used for $\vert B(x^*, \vert x-x^*)\cap\Omega^c\vert$, and we obtain
\[
\int_{B_x}  \fint_{B_x \cap \Omega^c}\frac{|\nabla u(z)|}{|y-z|^{n-1}}  \, dy \, dz
 \le c(n, b) \frac{1}{\vert B(x^*,\vert x-x^*\vert )\vert} \int_{B_x}  |\nabla u(z)| \int_{B_x} |y-z|^{1-n}  \, dy \, dz\,.
\]
Now  \cite[Lemma 3.11.3]{Ziemer89} yields that
\begin{equation*}
 \frac{1}{\vert B(x^*,\vert x-x^*\vert )\vert} \int_{B_x}  |\nabla u(z)| \int_{B_x} |y-z|^{1-n}  \, dy \, dz
 \le c(n, b) \int_{B_x}  \frac{|\nabla u(z)|}{|x-z|^{n-1}} \, dz\,.
\end{equation*}
Hence, equation \eqref{ovela} follows.
Thus we have  for every $x\in\Omega$ 
\[
|u(x)| 
\le c(n, b)  \int_{B(x, 2|x-x^*|)}  \frac{|\nabla u(z)|}{|x-z|^{n-1}} \, dz.
\]

Then we estimate the Riesz potential by the fractional maximal function.
Writing  $A_j =\{y \in \Rn : 2^{-j} r \le |x-y|< 2^{-j+1}r\}$ with $r=2|x-x^*|$ yields that
\[
\begin{split}
\int_{B_x} \frac{|\nabla u(y)|}{|x-z|^{n-1}} \, dy 
&= \sum_{j=1}^\infty \int_{A_j} \frac{|\nabla u(y)|}{|x-z|^{n-1}} \, dy\\
&\le  \sum_{j=1}^\infty (2^{-j}r)^{1-n} \int_{B(x, 2^{-j+1}r)} |\nabla u(y)| \, dy\\
&\le   c(n, \kappa) r^{1-\kappa} M_\kappa |\nabla u|(x)\,.
\end{split}
\]
In the last step  the sum of a geometric series is used.
\end{proof}

\begin{theorem}\label{Hardy-theorem}
Let $\Omega$ be an open and proper subset of $\Rn$, that has the outer regularity property with a constant $b$.
Let $\delta \in (0, n]$, $\kappa \in [0, 1)$,  $p\in (\delta /n, \delta/\kappa )$.
Then there exists a positive constant $c <\infty$  depending only on  $n$,  $\delta$, $\kappa$, 
 $p$, and the outer regularity constant $b$ such that
\begin{equation*}
\int_{\Omega}\frac{\vert u(x)\vert^p}{\dist (x,\partial \Omega)^{p(1-\kappa)}}\,d\Ha_{\infty}^{\delta-\kappa p}\le
c\int_{\Omega}\vert \nabla u(x)\vert^p\,d\Ha_{\infty}^{\delta}
\end{equation*}
for all $u\in C^{1}_0(\Omega )$.
\end{theorem}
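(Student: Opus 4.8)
The plan is to combine the pointwise estimate \eqref{Hajlasz_pointwise} from the preceding proposition with Adams' boundedness of the fractional maximal operator, Theorem \ref{thm:fractional-maximal-function}. First I would fix $u \in C^1_0(\Omega)$ and apply the pointwise inequality: for every $x \in \Omega$,
\[
\frac{|u(x)|^p}{\dist(x,\partial\Omega)^{p(1-\kappa)}} \le c(n,b,\kappa)^p \bigl(M_\kappa |\nabla u|(x)\bigr)^p .
\]
Since both sides are nonnegative functions on $\Omega$, monotonicity of the Choquet integral (property (C5)) and homogeneity (C1) give
\[
\int_\Omega \frac{|u(x)|^p}{\dist(x,\partial\Omega)^{p(1-\kappa)}}\, d\Ha_\infty^{\delta-\kappa p}
\le c(n,b,\kappa)^p \int_\Omega \bigl(M_\kappa |\nabla u|(x)\bigr)^p \, d\Ha_\infty^{\delta-\kappa p}.
\]

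Next I would pass from the integral over $\Omega$ to the integral over $\Rn$ using (C4) (since $\Omega \subset \Rn$), so that the right-hand side is bounded by $\int_\Rn (M_\kappa |\nabla u|)^p\, d\Ha_\infty^{\delta-\kappa p}$; here one uses the standing convention that $|\nabla u|$ is extended by $0$ outside $\Omega$, which is consistent with the definition of $M_\kappa$. Then, because $\delta \in (0,n]$, $\kappa \in [0,n)$ and the hypothesis $p \in (\delta/n, \delta/\kappa)$ matches exactly the admissible range in Theorem \ref{thm:fractional-maximal-function}, that theorem applies with the content exponent written as $\delta - \kappa p$ (note $\delta - \kappa p > 0$ precisely because $p < \delta/\kappa$, and the theorem's exponent $\delta - \kappa p$ is positive under the same condition), yielding
\[
\int_\Rn \bigl(M_\kappa |\nabla u|(x)\bigr)^p \, d\Ha_\infty^{\delta-\kappa p}
\le c(n,\delta,\kappa,p) \int_\Rn |\nabla u(x)|^p \, d\Ha_\infty^{\delta}.
\]
Finally, since $|\nabla u|$ vanishes outside $\Omega$ (and $\Ha^\delta_\infty$ of a set where the integrand is zero contributes nothing to the Choquet integral by the layer-cake formula), the last integral equals $\int_\Omega |\nabla u(x)|^p\, d\Ha_\infty^\delta$. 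Chaining the three displays and absorbing constants into a single $c$ depending only on $n,\delta,\kappa,p,b$ completes the proof.

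The only genuinely delicate points are bookkeeping ones rather than conceptual obstacles: first, checking that the admissibility conditions on $(p,\delta,\kappa)$ needed to invoke Theorem \ref{thm:fractional-maximal-function} are literally the ones assumed here (they are, once one observes $\delta/\kappa$ is interpreted as $+\infty$ when $\kappa=0$ per the paper's convention), and that $\delta-\kappa p$ lies in $(0,n]$ so the content is well defined; second, making sure the restriction of the Choquet integral from $\Rn$ to $\Omega$ and the zero-extension of $\nabla u$ are handled cleanly via (C4) and the layer-cake definition \eqref{IntegralDef}, since Choquet integrals are only subadditive and some care with monotonicity is warranted. No integrability issue arises because $u \in C^1_0(\Omega)$ makes $\nabla u$ bounded with compact support, so all the Choquet integrals above are finite. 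I expect the main obstacle, if any, to be purely notational: presenting the extension-by-zero and the passage between $\Omega$ and $\Rn$ precisely enough that the chain of inequalities is unambiguous.
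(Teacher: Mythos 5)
Your proposal is correct and follows essentially the same route as the paper: raise the pointwise estimate \eqref{Hajlasz_pointwise} to the power $p$, integrate in the Choquet sense, and invoke Theorem \ref{thm:fractional-maximal-function} for $M_\kappa$ applied to $|\nabla u|$ extended by zero. The extra bookkeeping you supply (monotonicity (C4)/(C5) to pass between $\Omega$ and $\Rn$, and the layer-cake identification of the two integrals of $|\nabla u|^p$) is left implicit in the paper but is exactly what justifies its two-line chain of inequalities.
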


\begin{proof}
Taking both sides of inequality
(\ref{Hajlasz_pointwise}) to the power $p$ and integrating in the Choquet sense with respect to the Hausdorff content 
and using the boundedness result of the fractional maximal function with respect to the Hausdorff content, 
Theorem~\ref{thm:fractional-maximal-function}, imply

\begin{equation*}
\begin{split}
\int_{\Omega}\frac{\vert u(x)\vert^{p}}{\dist (x,\partial \Omega)^{p(1-\kappa)}}\,d\Ha_{\infty}^{\delta-\kappa p}&\le
c \int_{\Omega}  (M_\kappa |\nabla u|(x))^p \,d\Ha_{\infty}^{\delta-\kappa p}\\
&\le
c \int_{\Omega}\vert \nabla u(x)\vert^p\,d\Ha_{\infty}^{\delta}\,. \qedhere
\end{split}
\end{equation*}
\end{proof}

Using Lemma \ref{GeneralizationOV} together Theorem \ref{Hardy-theorem}  yields the following result.

\begin{corollary}
Let $\Omega$ be an open and proper subset of $\Rn$, that has the outer regularity property. 
Let $\delta \in (0, n]$, $\kappa \in [0, 1)$,  $p\in (\delta /n, \delta/\kappa )$.
If $\epsilon\in (0, \delta )$
then there exists a positive constant $c <\infty$  depending only on  $n$,  $\delta$, $\kappa$,  $p$, 
$\epsilon$,  and the outer regularity constant $b$ such that
\begin{equation*}
\int_{\Omega}\frac{\vert u(x)\vert^p}{\dist (x,\partial \Omega)^{p(1-\kappa)}}\,d\Ha_{\infty}^{\delta-\kappa p}\le
c
\Biggl(
\int_{\Omega}\vert \nabla u(x)\vert^{\frac{p(\delta-\epsilon)}{\delta}}d\Ha_{\infty}^{\delta-\epsilon}
\Biggr)^{\frac{\delta}{p(\delta -\epsilon )}}
\end{equation*}
for all $u\in C^{1}_0(\Omega )$.
\end{corollary}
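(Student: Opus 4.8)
The plan is to derive the estimate by concatenating the two results already available, Theorem~\ref{Hardy-theorem} and Lemma~\ref{GeneralizationOV}. First I would observe that the standing hypotheses on $\Omega$, $\delta$, $\kappa$, and $p$ are precisely those required by Theorem~\ref{Hardy-theorem}; applying it gives a constant $c_1=c_1(n,\delta,\kappa,p,b)$ with
\[
\int_{\Omega}\frac{\vert u(x)\vert^p}{\dist (x,\partial \Omega)^{p(1-\kappa)}}\,d\Ha_{\infty}^{\delta-\kappa p}\le c_1\int_{\Omega}\vert \nabla u(x)\vert^p\,d\Ha_{\infty}^{\delta}
\]
for all $u\in C^1_0(\Omega)$. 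This reduces the problem to estimating $\int_{\Omega}\vert \nabla u\vert^p\,d\Ha_{\infty}^{\delta}$ from above by the gradient expression appearing on the right-hand side of the claim.

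For the second step I would invoke Lemma~\ref{GeneralizationOV} with the non-negative measurable function $f=|\nabla u|^p$ and the exponents $\delta_2=\delta$, $\delta_1=\delta-\epsilon$. The admissibility requirement $0<\delta_1<\delta_2\le n$ is met because $\epsilon\in(0,\delta)$ and $\delta\le n$. Since $\bigl(|\nabla u|^p\bigr)^{\delta_1/\delta_2}=|\nabla u|^{p(\delta-\epsilon)/\delta}$, the lemma yields
\[
\int_{\Omega}\vert \nabla u(x)\vert^p\,d\Ha_{\infty}^{\delta}\le\frac{\delta}{\delta-\epsilon}\Bigl(\int_{\Omega}\vert \nabla u(x)\vert^{\frac{p(\delta-\epsilon)}{\delta}}\,d\Ha_{\infty}^{\delta-\epsilon}\Bigr)^{\frac{\delta}{\delta-\epsilon}}.
\]
Substituting this bound into the inequality obtained in the first step and absorbing the factor $\delta/(\delta-\epsilon)$ into the constant, I would conclude with $c=c_1\,\delta/(\delta-\epsilon)$, a constant depending only on $n$, $\delta$, $\kappa$, $p$, $\epsilon$, and $b$.

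As both steps merely quote results established earlier in the paper, I do not expect a substantive obstacle. The points deserving a little care are: confirming that $u\in C^1_0(\Omega)$ makes $|\nabla u|^p$ an admissible function in Lemma~\ref{GeneralizationOV}; checking that the restriction $\epsilon<\delta$ indeed places $\delta-\epsilon$ inside the permitted range so that the lemma applies; and keeping track of the powers of $|\nabla u|$ and of the constant $\delta/(\delta-\epsilon)$ when the two displayed inequalities are combined.
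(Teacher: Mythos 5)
Your proposal is correct and is exactly the paper's intended argument: the paper's entire justification for this corollary is the sentence ``Using Lemma~\ref{GeneralizationOV} together Theorem~\ref{Hardy-theorem} yields the following result,'' and your two steps (Theorem~\ref{Hardy-theorem} followed by Lemma~\ref{GeneralizationOV} with $f=|\nabla u|^p$, $\delta_2=\delta$, $\delta_1=\delta-\epsilon$) are precisely that.

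One point you should not gloss over, however: the chain you write down ends with the outer exponent $\tfrac{\delta}{\delta-\epsilon}$, namely
\[
\int_{\Omega}\frac{\vert u(x)\vert^p}{\dist (x,\partial \Omega)^{p(1-\kappa)}}\,d\Ha_{\infty}^{\delta-\kappa p}
\le c\Bigl(\int_{\Omega}\vert \nabla u(x)\vert^{\frac{p(\delta-\epsilon)}{\delta}}\,d\Ha_{\infty}^{\delta-\epsilon}\Bigr)^{\frac{\delta}{\delta-\epsilon}},
\]
whereas the corollary as printed carries the exponent $\tfrac{\delta}{p(\delta-\epsilon)}$. These differ by a factor of $p$ in the exponent, and your proof does not (and cannot) produce the printed form: replacing $u$ by $\lambda u$ scales the left-hand side by $\lambda^{p}$ while the printed right-hand side scales by $\lambda^{1}$, so the printed inequality cannot hold with a $u$-independent constant unless $p=1$. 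The exponent in the statement is evidently a typo, and what your argument establishes is the homogeneity-consistent version displayed above. Since you were asked to prove the stated inequality, you should explicitly record that your derivation yields the exponent $\tfrac{\delta}{\delta-\epsilon}$ and that this is the correct form of the claim, rather than asserting that you ``conclude'' with the statement as written.
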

This means that the exponent of the function on the left-hand side could be  strictly larger than the exponent of the absolute value of its gradient on the right-hand side.
But this affects to the dimension of the Hausdorff content though.

In particular, Theorem \ref{Hardy-theorem} gives Corollary \ref{HardyCorollary}.

%%%%%%%%%%%%%%%%%%%%%%%%%%%%%%%%%%%%%%%%%%%%%%%%%%%%%%%%%%%%%%%%%%%%%%%%%%%%%%%%%
%%%%%%%%%%%%%%%%%%%%%%%%%%%%%%%%%%%%%%%%%%%%%%%%%%%%%%%%%%%%%%%%%%%%%%%%%%%%%%%%%
%%%%%%%%%%%%%%%%%%%%%%%%%%%%%%%%%%%%%%%%%%%%%%%%%%%%%%%%%%%%%%%%%%%%%%%%%%%%%%%%%
\section{ On inequalities for functions defined on $s$-John domains}\label{sJohn}

John domains
do not allow outward  $n$-dimensional $s$-cusps, $s>1$, that is $s$-spires.
An $s$-spire, $s>1$  is defined to be the set
\begin{equation}\label{spire}
\Omega =\{(x_1, x_2, \dots , x_n)\in (0,1)\times \R^{n-1}\, : \vert\vert (x_2, x_3, \dots ,x_n)\vert\vert < x^s\}\,.
\end{equation}
This domain is called a  power cusp in \cite[Section 5.6, p. 308]{MazyaPoborchi}.
It  is also  called  an  $n$-dimensional cusp, and it  belongs to a class of cuspidal domains
\cite{ADLG2013}. 
The  power cusp defined in \eqref{spire}
 is not  a John domain, but it is an example of $s$-John domains.
In order to consider cuspidal domains  we 
recall a definition of {$s$-John domains}.
\begin{definition}\label{s-john}
Suppose that  $\Omega$ is a bounded domain in $ \R^n$, $n\geq 2$. 
Let $s\ge1$.
The domain $\Omega$ is an $s$-John domain if there exist  finite, positive constants $c, C$ and a point $x_0 \in \Omega$ such that each point $x\in \Omega$ can be joined to $x_0$ by a 
rectifiable curve $\gamma_x:[0,\ell(\gamma_x)] \to \Omega$, parametrised by its arc length, such that $\gamma_x(0) = x$, $\gamma_x(\ell(\gamma_x)) = x_0$, $\ell(\gamma_x)\leq C\,,$ and
\[
\dist\big(\gamma_x(t), \partial \Omega \big)
\geq c t^s
 \quad \text{for all} \quad t\in[0, \ell(\gamma_x)].
\]
\end{definition}

The point $x_0$ is called a John centre and constants $c$ and $C$ are John constants.
If $s=1$, this definition reduces to the classical John domain definition.
John domains
are  called $1$-John domains in Definition \ref{s-john}.
One can construct Nikod\'ym-type domains where $s$-John property fails
\cite[1.1.4]{Mazya}.
Closely related concepts are defined in \cite[4.11.2]{Mazya}, that is the classes
${\mathcal{I}}^{n-1}_{p,\alpha}$ and ${\mathcal{J}}^{n-1}_{p,\alpha}$.

\begin{figure}
\begin{tikzpicture}[line cap=round,line join=round,>=triangle 45,x=1.0cm,y=1.0cm]
\clip(-1.12,-0.8) rectangle (2.1,2.1);
\draw [line width=2.pt] (0.,2.)-- (2.,2.);
\draw [line width=2.pt] (2.,0.)-- (0.,0.);
\draw [line width=2.pt] (0.,0.)-- (0.,0.5);
\draw [line width=2.pt] (0.,1.5)-- (0.,2);
\draw [line width=2.pt] (0.,0.5)-- (-1.,1);
\draw [line width=2.pt] (0.,1.5)-- (-1.,1);
\draw [line width=2.pt] (0.5,1.5)-- (0.5,0.5);
\draw[line width=2.pt,smooth,samples=100,domain=1:2] plot(\x,{((\x)-1)^(4.0)+1});
\draw[line width=2.pt,smooth,samples=100,domain=1:2] plot(\x,{0-(((\x)-1)^(4.0)+1)+2});
\end{tikzpicture}
\begin{tikzpicture}[line cap=round,line join=round,>=triangle 45,x=0.7cm,y=0.7cm]
\clip(-2.1,-2.5) rectangle (2.1,2.1);
\fill[line width=0.8pt] (0.9105222500000001,1.) -- (1.,1.) -- (1.,0.91052225) -- (0.9105222500000006,0.9105222500000009) -- cycle;
\fill[line width=0.8pt] (0.9105222500000005,0.8374277499999995) -- (1.,0.8374277499999999) -- (1.,0.74795) -- (0.9105222500000003,0.74795) -- cycle;
\fill[line width=0.8pt] (0.9105222500000003,0.54205) -- (1.,0.54205) -- (1.,0.45257225000000034) -- (0.9105222500000006,0.4525722500000008) -- cycle;
\fill[line width=0.8pt] (0.9105222500000008,0.3794777499999994) -- (1.,0.37947775000000034) -- (1.,0.29) -- (0.9105222500000003,0.29) -- cycle;
\fill[line width=0.8pt] (0.8374277499999997,0.37947774999999967) -- (0.74795,0.37947775) -- (0.74795,0.29) -- (0.8374277500000002,0.29) -- cycle;
\fill[line width=0.8pt] (0.8374277499999995,0.4525722500000007) -- (0.74795,0.45257225) -- (0.74795,0.54205) -- (0.8374277500000002,0.54205) -- cycle;
\fill[line width=0.8pt] (0.8374277500000002,0.74795) -- (0.74795,0.74795) -- (0.74795,0.8374277500000001) -- (0.8374277499999994,0.8374277499999996) -- cycle;
\fill[line width=0.8pt] (0.74795,0.9105222500000002) -- (0.74795,1.) -- (0.83742775,1.) -- (0.8374277499999996,0.9105222500000005) -- cycle;
\fill[line width=0.8pt] (0.29,1.) -- (0.37947774999999995,1.) -- (0.3794777499999991,0.9105222500000005) -- (0.29,0.9105222500000001) -- cycle;
\fill[line width=0.8pt] (0.45257225,1.) -- (0.54205,1.) -- (0.54205,0.9105222500000003) -- (0.4525722500000005,0.9105222500000005) -- cycle;
\fill[line width=0.8pt] (0.29,0.8374277499999999) -- (0.37947774999999945,0.8374277499999996) -- (0.37947774999999984,0.74795) -- (0.29,0.74795) -- cycle;
\fill[line width=0.8pt] (0.4525722500000004,0.8374277499999994) -- (0.54205,0.8374277500000002) -- (0.54205,0.74795) -- (0.4525722499999999,0.74795) -- cycle;
\fill[line width=0.8pt] (0.29,0.54205) -- (0.3794777499999999,0.54205) -- (0.37947774999999956,0.45257225000000045) -- (0.29,0.45257225) -- cycle;
\fill[line width=0.8pt] (0.45257224999999995,0.54205) -- (0.54205,0.54205) -- (0.54205,0.4525722499999998) -- (0.4525722500000005,0.4525722500000007) -- cycle;
\fill[line width=0.8pt] (0.4525722500000006,0.3794777499999995) -- (0.54205,0.3794777499999997) -- (0.54205,0.29) -- (0.4525722500000002,0.29) -- cycle;
\fill[line width=0.8pt] (0.29,0.37947774999999995) -- (0.3794777499999992,0.3794777499999994) -- (0.3794777500000001,0.29) -- (0.29,0.29) -- cycle;
\fill[line width=0.8pt] (0.9105222500000001,-1.) -- (1.,-1.) -- (1.,-0.91052225) -- (0.9105222500000006,-0.9105222500000009) -- cycle;
\fill[line width=0.8pt] (0.9105222500000005,-0.8374277499999995) -- (1.,-0.8374277499999999) -- (1.,-0.74795) -- (0.9105222500000003,-0.74795) -- cycle;
\fill[line width=0.8pt] (0.9105222500000003,-0.54205) -- (1.,-0.54205) -- (1.,-0.45257225000000034) -- (0.9105222500000006,-0.4525722500000008) -- cycle;
\fill[line width=0.8pt] (0.9105222500000008,-0.3794777499999994) -- (1.,-0.37947775000000034) -- (1.,-0.29) -- (0.9105222500000003,-0.29) -- cycle;
\fill[line width=0.8pt] (0.8374277499999997,-0.37947774999999967) -- (0.74795,-0.37947775) -- (0.74795,-0.29) -- (0.8374277500000002,-0.29) -- cycle;
\fill[line width=0.8pt] (0.8374277499999995,-0.4525722500000007) -- (0.74795,-0.45257225) -- (0.74795,-0.54205) -- (0.8374277500000002,-0.54205) -- cycle;
\fill[line width=0.8pt] (0.8374277500000002,-0.74795) -- (0.74795,-0.74795) -- (0.74795,-0.8374277500000001) -- (0.8374277499999994,-0.8374277499999996) -- cycle;
\fill[line width=0.8pt] (0.74795,-0.9105222500000002) -- (0.74795,-1.) -- (0.83742775,-1.) -- (0.8374277499999996,-0.9105222500000005) -- cycle;
\fill[line width=0.8pt] (0.29,-1.) -- (0.37947774999999995,-1.) -- (0.3794777499999991,-0.9105222500000005) -- (0.29,-0.9105222500000001) -- cycle;
\fill[line width=0.8pt] (0.45257225,-1.) -- (0.54205,-1.) -- (0.54205,-0.9105222500000003) -- (0.4525722500000005,-0.9105222500000005) -- cycle;
\fill[line width=0.8pt] (0.29,-0.8374277499999999) -- (0.37947774999999945,-0.8374277499999996) -- (0.37947774999999984,-0.74795) -- (0.29,-0.74795) -- cycle;
\fill[line width=0.8pt] (0.4525722500000004,-0.8374277499999994) -- (0.54205,-0.8374277500000002) -- (0.54205,-0.74795) -- (0.4525722499999999,-0.74795) -- cycle;
\fill[line width=0.8pt] (0.29,-0.54205) -- (0.3794777499999999,-0.54205) -- (0.37947774999999956,-0.45257225000000045) -- (0.29,-0.45257225) -- cycle;
\fill[line width=0.8pt] (0.45257224999999995,-0.54205) -- (0.54205,-0.54205) -- (0.54205,-0.4525722499999998) -- (0.4525722500000005,-0.4525722500000007) -- cycle;
\fill[line width=0.8pt] (0.4525722500000006,-0.3794777499999995) -- (0.54205,-0.3794777499999997) -- (0.54205,-0.29) -- (0.4525722500000002,-0.29) -- cycle;
\fill[line width=0.8pt] (0.29,-0.37947774999999995) -- (0.3794777499999992,-0.3794777499999994) -- (0.3794777500000001,-0.29) -- (0.29,-0.29) -- cycle;
\fill[line width=0.8pt] (-0.91052225,1.) -- (-1.,1.) -- (-1.,0.9105222500000001) -- (-0.9105222500000005,0.910522250000001) -- cycle;
\fill[line width=0.8pt] (-0.9105222500000004,0.8374277499999996) -- (-1.,0.83742775) -- (-1.,0.74795) -- (-0.9105222500000002,0.74795) -- cycle;
\fill[line width=0.8pt] (-0.9105222500000002,0.54205) -- (-1.,0.54205) -- (-1.,0.45257225000000045) -- (-0.9105222500000006,0.4525722500000009) -- cycle;
\fill[line width=0.8pt] (-0.9105222500000008,0.3794777499999995) -- (-1.,0.37947775000000045) -- (-1.,0.29) -- (-0.9105222500000003,0.29) -- cycle;
\fill[line width=0.8pt] (-0.8374277499999997,0.3794777499999998) -- (-0.74795,0.3794777500000001) -- (-0.74795,0.29) -- (-0.8374277500000002,0.29) -- cycle;
\fill[line width=0.8pt] (-0.8374277499999995,0.4525722500000008) -- (-0.74795,0.4525722500000001) -- (-0.74795,0.54205) -- (-0.8374277500000001,0.54205) -- cycle;
\fill[line width=0.8pt] (-0.8374277500000001,0.74795) -- (-0.74795,0.74795) -- (-0.74795,0.8374277500000002) -- (-0.8374277499999992,0.8374277499999997) -- cycle;
\fill[line width=0.8pt] (-0.74795,0.9105222500000003) -- (-0.74795,1.) -- (-0.8374277499999999,1.) -- (-0.8374277499999995,0.9105222500000006) -- cycle;
\fill[line width=0.8pt] (-0.29,1.) -- (-0.37947774999999984,1.) -- (-0.379477749999999,0.9105222500000005) -- (-0.29,0.9105222500000001) -- cycle;
\fill[line width=0.8pt] (-0.4525722499999999,1.) -- (-0.54205,1.) -- (-0.54205,0.9105222500000004) -- (-0.4525722500000004,0.9105222500000005) -- cycle;
\fill[line width=0.8pt] (-0.29,0.8374277499999999) -- (-0.37947774999999934,0.8374277499999996) -- (-0.3794777499999997,0.74795) -- (-0.29,0.74795) -- cycle;
\fill[line width=0.8pt] (-0.4525722500000003,0.8374277499999994) -- (-0.54205,0.8374277500000004) -- (-0.54205,0.74795) -- (-0.4525722499999998,0.74795) -- cycle;
\fill[line width=0.8pt] (-0.29,0.54205) -- (-0.37947774999999984,0.54205) -- (-0.3794777499999995,0.4525722500000005) -- (-0.29,0.45257225000000006) -- cycle;
\fill[line width=0.8pt] (-0.4525722499999999,0.54205) -- (-0.54205,0.54205) -- (-0.54205,0.45257224999999984) -- (-0.45257225000000045,0.45257225000000073) -- cycle;
\fill[line width=0.8pt] (-0.45257225000000056,0.37947774999999956) -- (-0.54205,0.3794777499999998) -- (-0.54205,0.29) -- (-0.4525722500000001,0.29) -- cycle;
\fill[line width=0.8pt] (-0.29,0.37947775) -- (-0.37947774999999917,0.37947774999999945) -- (-0.37947775000000006,0.29) -- (-0.29,0.29) -- cycle;
\fill[line width=0.8pt] (-0.91052225,-1.) -- (-1.,-1.) -- (-1.,-0.9105222500000001) -- (-0.9105222500000005,-0.910522250000001) -- cycle;
\fill[line width=0.8pt] (-0.9105222500000004,-0.8374277499999996) -- (-1.,-0.83742775) -- (-1.,-0.74795) -- (-0.9105222500000002,-0.74795) -- cycle;
\fill[line width=0.8pt] (-0.9105222500000002,-0.54205) -- (-1.,-0.54205) -- (-1.,-0.45257225000000045) -- (-0.9105222500000006,-0.4525722500000009) -- cycle;
\fill[line width=0.8pt] (-0.9105222500000008,-0.3794777499999995) -- (-1.,-0.37947775000000045) -- (-1.,-0.29) -- (-0.9105222500000003,-0.29) -- cycle;
\fill[line width=0.8pt] (-0.8374277499999997,-0.3794777499999998) -- (-0.74795,-0.3794777500000001) -- (-0.74795,-0.29) -- (-0.8374277500000002,-0.29) -- cycle;
\fill[line width=0.8pt] (-0.8374277499999995,-0.4525722500000008) -- (-0.74795,-0.4525722500000001) -- (-0.74795,-0.54205) -- (-0.8374277500000001,-0.54205) -- cycle;
\fill[line width=0.8pt] (-0.8374277500000001,-0.74795) -- (-0.74795,-0.74795) -- (-0.74795,-0.8374277500000002) -- (-0.8374277499999992,-0.8374277499999997) -- cycle;
\fill[line width=0.8pt] (-0.74795,-0.9105222500000003) -- (-0.74795,-1.) -- (-0.8374277499999999,-1.) -- (-0.8374277499999995,-0.9105222500000006) -- cycle;
\fill[line width=0.8pt] (-0.29,-1.) -- (-0.37947774999999984,-1.) -- (-0.379477749999999,-0.9105222500000005) -- (-0.29,-0.9105222500000001) -- cycle;
\fill[line width=0.8pt] (-0.4525722499999999,-1.) -- (-0.54205,-1.) -- (-0.54205,-0.9105222500000004) -- (-0.4525722500000004,-0.9105222500000005) -- cycle;
\fill[line width=0.8pt] (-0.29,-0.8374277499999999) -- (-0.37947774999999934,-0.8374277499999996) -- (-0.3794777499999997,-0.74795) -- (-0.29,-0.74795) -- cycle;
\fill[line width=0.8pt] (-0.4525722500000003,-0.8374277499999994) -- (-0.54205,-0.8374277500000004) -- (-0.54205,-0.74795) -- (-0.4525722499999998,-0.74795) -- cycle;
\fill[line width=0.8pt] (-0.29,-0.54205) -- (-0.37947774999999984,-0.54205) -- (-0.3794777499999995,-0.4525722500000005) -- (-0.29,-0.45257225000000006) -- cycle;
\fill[line width=0.8pt] (-0.4525722499999999,-0.54205) -- (-0.54205,-0.54205) -- (-0.54205,-0.45257224999999984) -- (-0.45257225000000045,-0.45257225000000073) -- cycle;
\fill[line width=0.8pt] (-0.45257225000000056,-0.37947774999999956) -- (-0.54205,-0.3794777499999998) -- (-0.54205,-0.29) -- (-0.4525722500000001,-0.29) -- cycle;
\fill[line width=0.8pt] (-0.29,-0.37947775) -- (-0.37947774999999917,-0.37947774999999945) -- (-0.37947775000000006,-0.29) -- (-0.29,-0.29) -- cycle;
\draw [line width=0.8pt] (0.9105222500000001,1.)-- (1.,1.);
\draw [line width=0.8pt] (1.,1.)-- (1.,0.91052225);
\draw [line width=0.8pt] (1.,0.91052225)-- (0.9105222500000006,0.9105222500000009);
\draw [line width=0.8pt] (0.9105222500000006,0.9105222500000009)-- (0.9105222500000001,1.);
\draw [line width=0.8pt] (0.9105222500000005,0.8374277499999995)-- (1.,0.8374277499999999);
\draw [line width=0.8pt] (1.,0.8374277499999999)-- (1.,0.74795);
\draw [line width=0.8pt] (1.,0.74795)-- (0.9105222500000003,0.74795);
\draw [line width=0.8pt] (0.9105222500000003,0.74795)-- (0.9105222500000005,0.8374277499999995);
\draw [line width=0.8pt] (0.9105222500000003,0.54205)-- (1.,0.54205);
\draw [line width=0.8pt] (1.,0.54205)-- (1.,0.45257225000000034);
\draw [line width=0.8pt] (1.,0.45257225000000034)-- (0.9105222500000006,0.4525722500000008);
\draw [line width=0.8pt] (0.9105222500000006,0.4525722500000008)-- (0.9105222500000003,0.54205);
\draw [line width=0.8pt] (0.9105222500000008,0.3794777499999994)-- (1.,0.37947775000000034);
\draw [line width=0.8pt] (1.,0.37947775000000034)-- (1.,0.29);
\draw [line width=0.8pt] (1.,0.29)-- (0.9105222500000003,0.29);
\draw [line width=0.8pt] (0.9105222500000003,0.29)-- (0.9105222500000008,0.3794777499999994);
\draw [line width=0.8pt] (0.8374277499999997,0.37947774999999967)-- (0.74795,0.37947775);
\draw [line width=0.8pt] (0.74795,0.37947775)-- (0.74795,0.29);
\draw [line width=0.8pt] (0.74795,0.29)-- (0.8374277500000002,0.29);
\draw [line width=0.8pt] (0.8374277500000002,0.29)-- (0.8374277499999997,0.37947774999999967);
\draw [line width=0.8pt] (0.8374277499999995,0.4525722500000007)-- (0.74795,0.45257225);
\draw [line width=0.8pt] (0.74795,0.45257225)-- (0.74795,0.54205);
\draw [line width=0.8pt] (0.74795,0.54205)-- (0.8374277500000002,0.54205);
\draw [line width=0.8pt] (0.8374277500000002,0.54205)-- (0.8374277499999995,0.4525722500000007);
\draw [line width=0.8pt] (0.8374277500000002,0.74795)-- (0.74795,0.74795);
\draw [line width=0.8pt] (0.74795,0.74795)-- (0.74795,0.8374277500000001);
\draw [line width=0.8pt] (0.74795,0.8374277500000001)-- (0.8374277499999994,0.8374277499999996);
\draw [line width=0.8pt] (0.8374277499999994,0.8374277499999996)-- (0.8374277500000002,0.74795);
\draw [line width=0.8pt] (0.74795,0.9105222500000002)-- (0.74795,1.);
\draw [line width=0.8pt] (0.74795,1.)-- (0.83742775,1.);
\draw [line width=0.8pt] (0.83742775,1.)-- (0.8374277499999996,0.9105222500000005);
\draw [line width=0.8pt] (0.8374277499999996,0.9105222500000005)-- (0.74795,0.9105222500000002);
\draw [line width=0.8pt] (0.29,1.)-- (0.37947774999999995,1.);
\draw [line width=0.8pt] (0.37947774999999995,1.)-- (0.3794777499999991,0.9105222500000005);
\draw [line width=0.8pt] (0.3794777499999991,0.9105222500000005)-- (0.29,0.9105222500000001);
\draw [line width=0.8pt] (0.29,0.9105222500000001)-- (0.29,1.);
\draw [line width=0.8pt] (0.45257225,1.)-- (0.54205,1.);
\draw [line width=0.8pt] (0.54205,1.)-- (0.54205,0.9105222500000003);
\draw [line width=0.8pt] (0.54205,0.9105222500000003)-- (0.4525722500000005,0.9105222500000005);
\draw [line width=0.8pt] (0.4525722500000005,0.9105222500000005)-- (0.45257225,1.);
\draw [line width=0.8pt] (0.29,0.8374277499999999)-- (0.37947774999999945,0.8374277499999996);
\draw [line width=0.8pt] (0.37947774999999945,0.8374277499999996)-- (0.37947774999999984,0.74795);
\draw [line width=0.8pt] (0.37947774999999984,0.74795)-- (0.29,0.74795);
\draw [line width=0.8pt] (0.29,0.74795)-- (0.29,0.8374277499999999);
\draw [line width=0.8pt] (0.4525722500000004,0.8374277499999994)-- (0.54205,0.8374277500000002);
\draw [line width=0.8pt] (0.54205,0.8374277500000002)-- (0.54205,0.74795);
\draw [line width=0.8pt] (0.54205,0.74795)-- (0.4525722499999999,0.74795);
\draw [line width=0.8pt] (0.4525722499999999,0.74795)-- (0.4525722500000004,0.8374277499999994);
\draw [line width=0.8pt] (0.29,0.54205)-- (0.3794777499999999,0.54205);
\draw [line width=0.8pt] (0.3794777499999999,0.54205)-- (0.37947774999999956,0.45257225000000045);
\draw [line width=0.8pt] (0.37947774999999956,0.45257225000000045)-- (0.29,0.45257225);
\draw [line width=0.8pt] (0.29,0.45257225)-- (0.29,0.54205);
\draw [line width=0.8pt] (0.45257224999999995,0.54205)-- (0.54205,0.54205);
\draw [line width=0.8pt] (0.54205,0.54205)-- (0.54205,0.4525722499999998);
\draw [line width=0.8pt] (0.54205,0.4525722499999998)-- (0.4525722500000005,0.4525722500000007);
\draw [line width=0.8pt] (0.4525722500000005,0.4525722500000007)-- (0.45257224999999995,0.54205);
\draw [line width=0.8pt] (0.4525722500000006,0.3794777499999995)-- (0.54205,0.3794777499999997);
\draw [line width=0.8pt] (0.54205,0.3794777499999997)-- (0.54205,0.29);
\draw [line width=0.8pt] (0.54205,0.29)-- (0.4525722500000002,0.29);
\draw [line width=0.8pt] (0.4525722500000002,0.29)-- (0.4525722500000006,0.3794777499999995);
\draw [line width=0.8pt] (0.29,0.37947774999999995)-- (0.3794777499999992,0.3794777499999994);
\draw [line width=0.8pt] (0.3794777499999992,0.3794777499999994)-- (0.3794777500000001,0.29);
\draw [line width=0.8pt] (0.3794777500000001,0.29)-- (0.29,0.29);
\draw [line width=0.8pt] (0.29,0.29)-- (0.29,0.37947774999999995);
\draw [line width=0.8pt] (0.9105222500000001,-1.)-- (1.,-1.);
\draw [line width=0.8pt] (1.,-1.)-- (1.,-0.91052225);
\draw [line width=0.8pt] (1.,-0.91052225)-- (0.9105222500000006,-0.9105222500000009);
\draw [line width=0.8pt] (0.9105222500000006,-0.9105222500000009)-- (0.9105222500000001,-1.);
\draw [line width=0.8pt] (0.9105222500000005,-0.8374277499999995)-- (1.,-0.8374277499999999);
\draw [line width=0.8pt] (1.,-0.8374277499999999)-- (1.,-0.74795);
\draw [line width=0.8pt] (1.,-0.74795)-- (0.9105222500000003,-0.74795);
\draw [line width=0.8pt] (0.9105222500000003,-0.74795)-- (0.9105222500000005,-0.8374277499999995);
\draw [line width=0.8pt] (0.9105222500000003,-0.54205)-- (1.,-0.54205);
\draw [line width=0.8pt] (1.,-0.54205)-- (1.,-0.45257225000000034);
\draw [line width=0.8pt] (1.,-0.45257225000000034)-- (0.9105222500000006,-0.4525722500000008);
\draw [line width=0.8pt] (0.9105222500000006,-0.4525722500000008)-- (0.9105222500000003,-0.54205);
\draw [line width=0.8pt] (0.9105222500000008,-0.3794777499999994)-- (1.,-0.37947775000000034);
\draw [line width=0.8pt] (1.,-0.37947775000000034)-- (1.,-0.29);
\draw [line width=0.8pt] (1.,-0.29)-- (0.9105222500000003,-0.29);
\draw [line width=0.8pt] (0.9105222500000003,-0.29)-- (0.9105222500000008,-0.3794777499999994);
\draw [line width=0.8pt] (0.8374277499999997,-0.37947774999999967)-- (0.74795,-0.37947775);
\draw [line width=0.8pt] (0.74795,-0.37947775)-- (0.74795,-0.29);
\draw [line width=0.8pt] (0.74795,-0.29)-- (0.8374277500000002,-0.29);
\draw [line width=0.8pt] (0.8374277500000002,-0.29)-- (0.8374277499999997,-0.37947774999999967);
\draw [line width=0.8pt] (0.8374277499999995,-0.4525722500000007)-- (0.74795,-0.45257225);
\draw [line width=0.8pt] (0.74795,-0.45257225)-- (0.74795,-0.54205);
\draw [line width=0.8pt] (0.74795,-0.54205)-- (0.8374277500000002,-0.54205);
\draw [line width=0.8pt] (0.8374277500000002,-0.54205)-- (0.8374277499999995,-0.4525722500000007);
\draw [line width=0.8pt] (0.8374277500000002,-0.74795)-- (0.74795,-0.74795);
\draw [line width=0.8pt] (0.74795,-0.74795)-- (0.74795,-0.8374277500000001);
\draw [line width=0.8pt] (0.74795,-0.8374277500000001)-- (0.8374277499999994,-0.8374277499999996);
\draw [line width=0.8pt] (0.8374277499999994,-0.8374277499999996)-- (0.8374277500000002,-0.74795);
\draw [line width=0.8pt] (0.74795,-0.9105222500000002)-- (0.74795,-1.);
\draw [line width=0.8pt] (0.74795,-1.)-- (0.83742775,-1.);
\draw [line width=0.8pt] (0.83742775,-1.)-- (0.8374277499999996,-0.9105222500000005);
\draw [line width=0.8pt] (0.8374277499999996,-0.9105222500000005)-- (0.74795,-0.9105222500000002);
\draw [line width=0.8pt] (0.29,-1.)-- (0.37947774999999995,-1.);
\draw [line width=0.8pt] (0.37947774999999995,-1.)-- (0.3794777499999991,-0.9105222500000005);
\draw [line width=0.8pt] (0.3794777499999991,-0.9105222500000005)-- (0.29,-0.9105222500000001);
\draw [line width=0.8pt] (0.29,-0.9105222500000001)-- (0.29,-1.);
\draw [line width=0.8pt] (0.45257225,-1.)-- (0.54205,-1.);
\draw [line width=0.8pt] (0.54205,-1.)-- (0.54205,-0.9105222500000003);
\draw [line width=0.8pt] (0.54205,-0.9105222500000003)-- (0.4525722500000005,-0.9105222500000005);
\draw [line width=0.8pt] (0.4525722500000005,-0.9105222500000005)-- (0.45257225,-1.);
\draw [line width=0.8pt] (0.29,-0.8374277499999999)-- (0.37947774999999945,-0.8374277499999996);
\draw [line width=0.8pt] (0.37947774999999945,-0.8374277499999996)-- (0.37947774999999984,-0.74795);
\draw [line width=0.8pt] (0.37947774999999984,-0.74795)-- (0.29,-0.74795);
\draw [line width=0.8pt] (0.29,-0.74795)-- (0.29,-0.8374277499999999);
\draw [line width=0.8pt] (0.4525722500000004,-0.8374277499999994)-- (0.54205,-0.8374277500000002);
\draw [line width=0.8pt] (0.54205,-0.8374277500000002)-- (0.54205,-0.74795);
\draw [line width=0.8pt] (0.54205,-0.74795)-- (0.4525722499999999,-0.74795);
\draw [line width=0.8pt] (0.4525722499999999,-0.74795)-- (0.4525722500000004,-0.8374277499999994);
\draw [line width=0.8pt] (0.29,-0.54205)-- (0.3794777499999999,-0.54205);
\draw [line width=0.8pt] (0.3794777499999999,-0.54205)-- (0.37947774999999956,-0.45257225000000045);
\draw [line width=0.8pt] (0.37947774999999956,-0.45257225000000045)-- (0.29,-0.45257225);
\draw [line width=0.8pt] (0.29,-0.45257225)-- (0.29,-0.54205);
\draw [line width=0.8pt] (0.45257224999999995,-0.54205)-- (0.54205,-0.54205);
\draw [line width=0.8pt] (0.54205,-0.54205)-- (0.54205,-0.4525722499999998);
\draw [line width=0.8pt] (0.54205,-0.4525722499999998)-- (0.4525722500000005,-0.4525722500000007);
\draw [line width=0.8pt] (0.4525722500000005,-0.4525722500000007)-- (0.45257224999999995,-0.54205);
\draw [line width=0.8pt] (0.4525722500000006,-0.3794777499999995)-- (0.54205,-0.3794777499999997);
\draw [line width=0.8pt] (0.54205,-0.3794777499999997)-- (0.54205,-0.29);
\draw [line width=0.8pt] (0.54205,-0.29)-- (0.4525722500000002,-0.29);
\draw [line width=0.8pt] (0.4525722500000002,-0.29)-- (0.4525722500000006,-0.3794777499999995);
\draw [line width=0.8pt] (0.29,-0.37947774999999995)-- (0.3794777499999992,-0.3794777499999994);
\draw [line width=0.8pt] (0.3794777499999992,-0.3794777499999994)-- (0.3794777500000001,-0.29);
\draw [line width=0.8pt] (0.3794777500000001,-0.29)-- (0.29,-0.29);
\draw [line width=0.8pt] (0.29,-0.29)-- (0.29,-0.37947774999999995);
\draw [line width=0.8pt] (-0.91052225,1.)-- (-1.,1.);
\draw [line width=0.8pt] (-1.,1.)-- (-1.,0.9105222500000001);
\draw [line width=0.8pt] (-1.,0.9105222500000001)-- (-0.9105222500000005,0.910522250000001);
\draw [line width=0.8pt] (-0.9105222500000005,0.910522250000001)-- (-0.91052225,1.);
\draw [line width=0.8pt] (-0.9105222500000004,0.8374277499999996)-- (-1.,0.83742775);
\draw [line width=0.8pt] (-1.,0.83742775)-- (-1.,0.74795);
\draw [line width=0.8pt] (-1.,0.74795)-- (-0.9105222500000002,0.74795);
\draw [line width=0.8pt] (-0.9105222500000002,0.74795)-- (-0.9105222500000004,0.8374277499999996);
\draw [line width=0.8pt] (-0.9105222500000002,0.54205)-- (-1.,0.54205);
\draw [line width=0.8pt] (-1.,0.54205)-- (-1.,0.45257225000000045);
\draw [line width=0.8pt] (-1.,0.45257225000000045)-- (-0.9105222500000006,0.4525722500000009);
\draw [line width=0.8pt] (-0.9105222500000006,0.4525722500000009)-- (-0.9105222500000002,0.54205);
\draw [line width=0.8pt] (-0.9105222500000008,0.3794777499999995)-- (-1.,0.37947775000000045);
\draw [line width=0.8pt] (-1.,0.37947775000000045)-- (-1.,0.29);
\draw [line width=0.8pt] (-1.,0.29)-- (-0.9105222500000003,0.29);
\draw [line width=0.8pt] (-0.9105222500000003,0.29)-- (-0.9105222500000008,0.3794777499999995);
\draw [line width=0.8pt] (-0.8374277499999997,0.3794777499999998)-- (-0.74795,0.3794777500000001);
\draw [line width=0.8pt] (-0.74795,0.3794777500000001)-- (-0.74795,0.29);
\draw [line width=0.8pt] (-0.74795,0.29)-- (-0.8374277500000002,0.29);
\draw [line width=0.8pt] (-0.8374277500000002,0.29)-- (-0.8374277499999997,0.3794777499999998);
\draw [line width=0.8pt] (-0.8374277499999995,0.4525722500000008)-- (-0.74795,0.4525722500000001);
\draw [line width=0.8pt] (-0.74795,0.4525722500000001)-- (-0.74795,0.54205);
\draw [line width=0.8pt] (-0.74795,0.54205)-- (-0.8374277500000001,0.54205);
\draw [line width=0.8pt] (-0.8374277500000001,0.54205)-- (-0.8374277499999995,0.4525722500000008);
\draw [line width=0.8pt] (-0.8374277500000001,0.74795)-- (-0.74795,0.74795);
\draw [line width=0.8pt] (-0.74795,0.74795)-- (-0.74795,0.8374277500000002);
\draw [line width=0.8pt] (-0.74795,0.8374277500000002)-- (-0.8374277499999992,0.8374277499999997);
\draw [line width=0.8pt] (-0.8374277499999992,0.8374277499999997)-- (-0.8374277500000001,0.74795);
\draw [line width=0.8pt] (-0.74795,0.9105222500000003)-- (-0.74795,1.);
\draw [line width=0.8pt] (-0.74795,1.)-- (-0.8374277499999999,1.);
\draw [line width=0.8pt] (-0.8374277499999999,1.)-- (-0.8374277499999995,0.9105222500000006);
\draw [line width=0.8pt] (-0.8374277499999995,0.9105222500000006)-- (-0.74795,0.9105222500000003);
\draw [line width=0.8pt] (-0.29,1.)-- (-0.37947774999999984,1.);
\draw [line width=0.8pt] (-0.37947774999999984,1.)-- (-0.379477749999999,0.9105222500000005);
\draw [line width=0.8pt] (-0.379477749999999,0.9105222500000005)-- (-0.29,0.9105222500000001);
\draw [line width=0.8pt] (-0.29,0.9105222500000001)-- (-0.29,1.);
\draw [line width=0.8pt] (-0.4525722499999999,1.)-- (-0.54205,1.);
\draw [line width=0.8pt] (-0.54205,1.)-- (-0.54205,0.9105222500000004);
\draw [line width=0.8pt] (-0.54205,0.9105222500000004)-- (-0.4525722500000004,0.9105222500000005);
\draw [line width=0.8pt] (-0.4525722500000004,0.9105222500000005)-- (-0.4525722499999999,1.);
\draw [line width=0.8pt] (-0.29,0.8374277499999999)-- (-0.37947774999999934,0.8374277499999996);
\draw [line width=0.8pt] (-0.37947774999999934,0.8374277499999996)-- (-0.3794777499999997,0.74795);
\draw [line width=0.8pt] (-0.3794777499999997,0.74795)-- (-0.29,0.74795);
\draw [line width=0.8pt] (-0.29,0.74795)-- (-0.29,0.8374277499999999);
\draw [line width=0.8pt] (-0.4525722500000003,0.8374277499999994)-- (-0.54205,0.8374277500000004);
\draw [line width=0.8pt] (-0.54205,0.8374277500000004)-- (-0.54205,0.74795);
\draw [line width=0.8pt] (-0.54205,0.74795)-- (-0.4525722499999998,0.74795);
\draw [line width=0.8pt] (-0.4525722499999998,0.74795)-- (-0.4525722500000003,0.8374277499999994);
\draw [line width=0.8pt] (-0.29,0.54205)-- (-0.37947774999999984,0.54205);
\draw [line width=0.8pt] (-0.37947774999999984,0.54205)-- (-0.3794777499999995,0.4525722500000005);
\draw [line width=0.8pt] (-0.3794777499999995,0.4525722500000005)-- (-0.29,0.45257225000000006);
\draw [line width=0.8pt] (-0.29,0.45257225000000006)-- (-0.29,0.54205);
\draw [line width=0.8pt] (-0.4525722499999999,0.54205)-- (-0.54205,0.54205);
\draw [line width=0.8pt] (-0.54205,0.54205)-- (-0.54205,0.45257224999999984);
\draw [line width=0.8pt] (-0.54205,0.45257224999999984)-- (-0.45257225000000045,0.45257225000000073);
\draw [line width=0.8pt] (-0.45257225000000045,0.45257225000000073)-- (-0.4525722499999999,0.54205);
\draw [line width=0.8pt] (-0.45257225000000056,0.37947774999999956)-- (-0.54205,0.3794777499999998);
\draw [line width=0.8pt] (-0.54205,0.3794777499999998)-- (-0.54205,0.29);
\draw [line width=0.8pt] (-0.54205,0.29)-- (-0.4525722500000001,0.29);
\draw [line width=0.8pt] (-0.4525722500000001,0.29)-- (-0.45257225000000056,0.37947774999999956);
\draw [line width=0.8pt] (-0.29,0.37947775)-- (-0.37947774999999917,0.37947774999999945);
\draw [line width=0.8pt] (-0.37947774999999917,0.37947774999999945)-- (-0.37947775000000006,0.29);
\draw [line width=0.8pt] (-0.37947775000000006,0.29)-- (-0.29,0.29);
\draw [line width=0.8pt] (-0.29,0.29)-- (-0.29,0.37947775);
\draw [line width=0.8pt] (-0.91052225,-1.)-- (-1.,-1.);
\draw [line width=0.8pt] (-1.,-1.)-- (-1.,-0.9105222500000001);
\draw [line width=0.8pt] (-1.,-0.9105222500000001)-- (-0.9105222500000005,-0.910522250000001);
\draw [line width=0.8pt] (-0.9105222500000005,-0.910522250000001)-- (-0.91052225,-1.);
\draw [line width=0.8pt] (-0.9105222500000004,-0.8374277499999996)-- (-1.,-0.83742775);
\draw [line width=0.8pt] (-1.,-0.83742775)-- (-1.,-0.74795);
\draw [line width=0.8pt] (-1.,-0.74795)-- (-0.9105222500000002,-0.74795);
\draw [line width=0.8pt] (-0.9105222500000002,-0.74795)-- (-0.9105222500000004,-0.8374277499999996);
\draw [line width=0.8pt] (-0.9105222500000002,-0.54205)-- (-1.,-0.54205);
\draw [line width=0.8pt] (-1.,-0.54205)-- (-1.,-0.45257225000000045);
\draw [line width=0.8pt] (-1.,-0.45257225000000045)-- (-0.9105222500000006,-0.4525722500000009);
\draw [line width=0.8pt] (-0.9105222500000006,-0.4525722500000009)-- (-0.9105222500000002,-0.54205);
\draw [line width=0.8pt] (-0.9105222500000008,-0.3794777499999995)-- (-1.,-0.37947775000000045);
\draw [line width=0.8pt] (-1.,-0.37947775000000045)-- (-1.,-0.29);
\draw [line width=0.8pt] (-1.,-0.29)-- (-0.9105222500000003,-0.29);
\draw [line width=0.8pt] (-0.9105222500000003,-0.29)-- (-0.9105222500000008,-0.3794777499999995);
\draw [line width=0.8pt] (-0.8374277499999997,-0.3794777499999998)-- (-0.74795,-0.3794777500000001);
\draw [line width=0.8pt] (-0.74795,-0.3794777500000001)-- (-0.74795,-0.29);
\draw [line width=0.8pt] (-0.74795,-0.29)-- (-0.8374277500000002,-0.29);
\draw [line width=0.8pt] (-0.8374277500000002,-0.29)-- (-0.8374277499999997,-0.3794777499999998);
\draw [line width=0.8pt] (-0.8374277499999995,-0.4525722500000008)-- (-0.74795,-0.4525722500000001);
\draw [line width=0.8pt] (-0.74795,-0.4525722500000001)-- (-0.74795,-0.54205);
\draw [line width=0.8pt] (-0.74795,-0.54205)-- (-0.8374277500000001,-0.54205);
\draw [line width=0.8pt] (-0.8374277500000001,-0.54205)-- (-0.8374277499999995,-0.4525722500000008);
\draw [line width=0.8pt] (-0.8374277500000001,-0.74795)-- (-0.74795,-0.74795);
\draw [line width=0.8pt] (-0.74795,-0.74795)-- (-0.74795,-0.8374277500000002);
\draw [line width=0.8pt] (-0.74795,-0.8374277500000002)-- (-0.8374277499999992,-0.8374277499999997);
\draw [line width=0.8pt] (-0.8374277499999992,-0.8374277499999997)-- (-0.8374277500000001,-0.74795);
\draw [line width=0.8pt] (-0.74795,-0.9105222500000003)-- (-0.74795,-1.);
\draw [line width=0.8pt] (-0.74795,-1.)-- (-0.8374277499999999,-1.);
\draw [line width=0.8pt] (-0.8374277499999999,-1.)-- (-0.8374277499999995,-0.9105222500000006);
\draw [line width=0.8pt] (-0.8374277499999995,-0.9105222500000006)-- (-0.74795,-0.9105222500000003);
\draw [line width=0.8pt] (-0.29,-1.)-- (-0.37947774999999984,-1.);
\draw [line width=0.8pt] (-0.37947774999999984,-1.)-- (-0.379477749999999,-0.9105222500000005);
\draw [line width=0.8pt] (-0.379477749999999,-0.9105222500000005)-- (-0.29,-0.9105222500000001);
\draw [line width=0.8pt] (-0.29,-0.9105222500000001)-- (-0.29,-1.);
\draw [line width=0.8pt] (-0.4525722499999999,-1.)-- (-0.54205,-1.);
\draw [line width=0.8pt] (-0.54205,-1.)-- (-0.54205,-0.9105222500000004);
\draw [line width=0.8pt] (-0.54205,-0.9105222500000004)-- (-0.4525722500000004,-0.9105222500000005);
\draw [line width=0.8pt] (-0.4525722500000004,-0.9105222500000005)-- (-0.4525722499999999,-1.);
\draw [line width=0.8pt] (-0.29,-0.8374277499999999)-- (-0.37947774999999934,-0.8374277499999996);
\draw [line width=0.8pt] (-0.37947774999999934,-0.8374277499999996)-- (-0.3794777499999997,-0.74795);
\draw [line width=0.8pt] (-0.3794777499999997,-0.74795)-- (-0.29,-0.74795);
\draw [line width=0.8pt] (-0.29,-0.74795)-- (-0.29,-0.8374277499999999);
\draw [line width=0.8pt] (-0.4525722500000003,-0.8374277499999994)-- (-0.54205,-0.8374277500000004);
\draw [line width=0.8pt] (-0.54205,-0.8374277500000004)-- (-0.54205,-0.74795);
\draw [line width=0.8pt] (-0.54205,-0.74795)-- (-0.4525722499999998,-0.74795);
\draw [line width=0.8pt] (-0.4525722499999998,-0.74795)-- (-0.4525722500000003,-0.8374277499999994);
\draw [line width=0.8pt] (-0.29,-0.54205)-- (-0.37947774999999984,-0.54205);
\draw [line width=0.8pt] (-0.37947774999999984,-0.54205)-- (-0.3794777499999995,-0.4525722500000005);
\draw [line width=0.8pt] (-0.3794777499999995,-0.4525722500000005)-- (-0.29,-0.45257225000000006);
\draw [line width=0.8pt] (-0.29,-0.45257225000000006)-- (-0.29,-0.54205);
\draw [line width=0.8pt] (-0.4525722499999999,-0.54205)-- (-0.54205,-0.54205);
\draw [line width=0.8pt] (-0.54205,-0.54205)-- (-0.54205,-0.45257224999999984);
\draw [line width=0.8pt] (-0.54205,-0.45257224999999984)-- (-0.45257225000000045,-0.45257225000000073);
\draw [line width=0.8pt] (-0.45257225000000045,-0.45257225000000073)-- (-0.4525722499999999,-0.54205);
\draw [line width=0.8pt] (-0.45257225000000056,-0.37947774999999956)-- (-0.54205,-0.3794777499999998);
\draw [line width=0.8pt] (-0.54205,-0.3794777499999998)-- (-0.54205,-0.29);
\draw [line width=0.8pt] (-0.54205,-0.29)-- (-0.4525722500000001,-0.29);
\draw [line width=0.8pt] (-0.4525722500000001,-0.29)-- (-0.45257225000000056,-0.37947774999999956);
\draw [line width=0.8pt] (-0.29,-0.37947775)-- (-0.37947774999999917,-0.37947774999999945);
\draw [line width=0.8pt] (-0.37947774999999917,-0.37947774999999945)-- (-0.37947775000000006,-0.29);
\draw [line width=0.8pt] (-0.37947775000000006,-0.29)-- (-0.29,-0.29);
\draw [line width=0.8pt] (-0.29,-0.29)-- (-0.29,-0.37947775);
\draw [line width=2.pt] (0.,0.) circle (1.4cm);
\end{tikzpicture}
\begin{tikzpicture}[line cap=round,line join=round,>=triangle 45,x=1.0cm,y=1.0cm]
\clip(-0.1,-0.8456198347107411) rectangle (3.05,2.9229752066115715);
\draw [line width=2.pt] (0.,2.)-- (2.,2.);
\draw [line width=2.pt] (2.,0.)-- (0.,0.);
\draw [line width=2.pt] (0.,0.)-- (0.,2.);
\draw [line width=2.pt] (0.,1.)-- (2.,1.);
\draw[line width=2.pt,smooth,samples=100,domain=2:3] plot(\x,{((\x)-3)^(4)+1});
\draw[line width=2.pt,smooth,samples=100,domain=2:3] plot(\x,{0-(((\x)-3)^(4)+1)+2});
\end{tikzpicture}
\begin{tikzpicture}[line cap=round,line join=round,>=triangle 45,x=1.2cm,y=1.2cm]
\clip(-0.05,-1.55) rectangle (1.05,1.1);
\draw[line width=2.pt,smooth,samples=100,domain=0:1] plot(\x,{(\x)^(2)});
\draw[line width=2.pt,smooth,samples=100,domain=0:1] plot(\x,{0-(\x)^(2)});
\draw [line width=2.pt] (1.,-1) -- (1.,1);
\end{tikzpicture}
\caption{From the left to the right: $1$-John domain, $1$-John domain from \cite{HarK15}, $4$-John domain, $2$-spire.}\label{fig:John}
\end{figure}
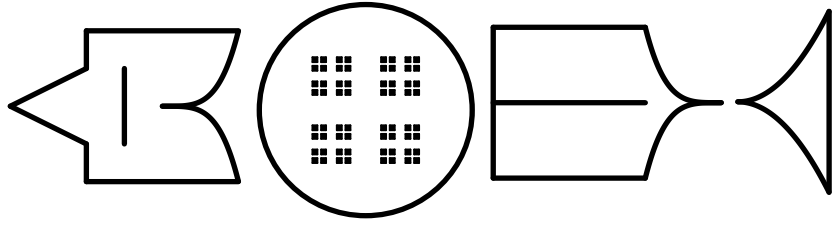

It is well known that there exists a pointwise estimate for continuously differentiable functions, that is  $C^1(\Omega)$-functions defined on a John domain,
for example
\cite[(3.4)]{HH-S_JFA} and references therein.
Less known is that there is also  a corresponding pointwise estimate for $s$-John domains.
Theorem \cite[Theorem 4.4]{HH-S_Manuscripta} with $\phi (t)=t^s$ implies the following theorem.

\begin{theorem}\label{pointwise_sJohn}
Let $s\in [1,n/(n-1) )$.
Suppose that $\Omega$ is a  bounded  $s$-John  domain in $\Rn$ with a John centre $x_0$. If 
$u\in\C^1(\Omega )$,
then for every $x\in\Omega$
\begin{equation}\label{sJohn_point-wise}
|u(x)-u_B|\le c\int_{\Omega}\frac{|\nabla u(y)|}{|x-y|^{s(n-1)}}\,dy\,.
\end{equation}
The constant $c$ does not depend on $u$, it depends only on $\Omega$.
Here $B=B(x_0,k\dist (x_0,\partial \Omega ))$ with some constant $k$  depending only on $\Omega$.
\end{theorem}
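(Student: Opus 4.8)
The plan is the classical chaining argument along the John curve, adapted to the weaker $s$-John geometry. Fix $x\in\Omega$ and let $\gamma=\gamma_x:[0,\ell]\to\Omega$ be the $s$-John curve from $x=\gamma(0)$ to the John centre $x_0=\gamma(\ell)$, with $\ell\le C$ and $\rho(t):=\dist(\gamma(t),\partial\Omega)\ge c\,t^{s}$; note that $\rho$ is $1$-Lipschitz. First I would string a finite chain of Whitney-type balls along $\gamma$: put $t_0=\ell$, and having chosen $t_j$ set $x_j=\gamma(t_j)$, $r_j=\tfrac12\rho(t_j)$, $B_j=B(x_j,r_j)$, and $t_{j+1}=\max\{0,\ t_j-\tfrac12 r_j\}$. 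Then $\overline{B_j}\subset\Omega$ with $\dist(B_j,\partial\Omega)=r_j$; since $|x_j-x_{j+1}|\le\tfrac12 r_j$ and $\rho$ is Lipschitz one checks $r_{j+1}\in[\tfrac34 r_j,\tfrac54 r_j]$ and $B(x_j,\tfrac14 r_j)\subset B_j\cap B_{j+1}$, so consecutive balls overlap with $|B_j\cap B_{j+1}|\asymp|B_j|\asymp|B_{j+1}|$; and since $\rho(0)=\dist(x,\partial\Omega)>0$, once $t_j$ falls below a threshold depending on $x$ the next step lands at $0$, so the strictly decreasing $t_j$ reach $t_N=0$ (that is, $x_N=x$) after finitely many steps. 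Choosing $k=\tfrac12$ in the statement, so that $B=B\bigl(x_0,\tfrac12\dist(x_0,\partial\Omega)\bigr)=B_0$, I would telescope:
\[
|u(x)-u_B|\ \le\ |u(x)-u_{B_N}|+\sum_{j=0}^{N-1}|u_{B_{j+1}}-u_{B_j}|\,.
\]

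For the terminal term, $B_N$ is the ball $B\bigl(x,\tfrac12\dist(x,\partial\Omega)\bigr)$ centred at $x$, so \cite[Lemma 7.16]{GT} (already invoked above) gives $|u(x)-u_{B_N}|\le c(n)\int_{B_N}|\nabla u(y)|\,|x-y|^{1-n}\,dy$; since $|x-y|<\tfrac12\diam\Omega$ on $B_N$ and $s\ge1$, the kernel $|x-y|^{1-n}$ is there dominated by a constant times $|x-y|^{-s(n-1)}$, so this term is at most $c(n,s,\diam\Omega)\int_\Omega|\nabla u(y)|\,|x-y|^{-s(n-1)}\,dy$. For each link I would pass through the average over $B_j\cap B_{j+1}$ and combine the measure comparison above with the $(1,1)$-Poincar\'e inequality on a ball to obtain
\[
|u_{B_{j+1}}-u_{B_j}|\ \le\ c\,r_j\,\vint_{B_j}|\nabla u|+c\,r_{j+1}\,\vint_{B_{j+1}}|\nabla u|\,.
\]
It then remains to absorb $\displaystyle\sum_{j=0}^{N}r_j\,\vint_{B_j}|\nabla u|=\frac1{\omega_n}\sum_{j=0}^{N}r_j^{1-n}\!\int_{B_j}|\nabla u(y)|\,dy$ into the right-hand side.

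This is where the $s$-John condition is used quantitatively. For $y\in B_j$ we have $|x-y|\le|x-x_j|+r_j\le t_j+r_j$, while $r_j=\tfrac12\rho(t_j)\ge\tfrac c2 t_j^{s}\ge\tfrac c2|x-x_j|^{s}$ and $r_j\ge(\diam\Omega)^{1-s}r_j^{s}$; combining these (using $s\ge1$) gives $r_j\gtrsim(t_j+r_j)^{s}\ge|x-y|^{s}$, hence $r_j^{1-n}\le c\,|x-y|^{-s(n-1)}$ on $B_j$ with $c=c(n,s,\diam\Omega)$. Therefore
\[
\sum_{j=0}^{N}r_j\,\vint_{B_j}|\nabla u|\ \le\ c\sum_{j=0}^{N}\int_{B_j}\frac{|\nabla u(y)|}{|x-y|^{s(n-1)}}\,dy\ =\ c\int_\Omega\Bigl(\sum_{j=0}^{N}\chi_{B_j}(y)\Bigr)\frac{|\nabla u(y)|}{|x-y|^{s(n-1)}}\,dy,
\]
and the argument closes as soon as $\sum_{j}\chi_{B_j}\le M$ pointwise, with $M$ depending only on $\Omega$.

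I expect this bounded-overlap bound to be the main obstacle. For honest John domains ($s=1$) it is the familiar ``chain condition'' and is essentially routine, but for $s>1$ the curve accumulates many small-scale balls near the cusp and one must argue more carefully that they still cover the relevant portion of $\Omega$ with controlled multiplicity. This, together with the requirement $s(n-1)<n$, which makes $|x-y|^{-s(n-1)}$ locally integrable so that the right-hand side is finite (the estimate being otherwise vacuous), is precisely where the hypothesis $s<n/(n-1)$ enters. Granting the overlap bound, the displays above yield $|u(x)-u_B|\le c\int_\Omega|\nabla u(y)|\,|x-y|^{-s(n-1)}\,dy$ with $c=c(\Omega)$ for every $x\in\Omega$, which is the assertion. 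Alternatively one may simply invoke \cite[Theorem 4.4]{HH-S_Manuscripta} with $\phi(t)=t^{s}$, of which the present theorem is the stated special case.
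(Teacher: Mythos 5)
The paper offers no independent argument for this theorem: its entire ``proof'' is the sentence preceding the statement, namely that \cite[Theorem 4.4]{HH-S_Manuscripta} applied with $\phi(t)=t^{s}$ specialises to \eqref{sJohn_point-wise}. So your closing fallback coincides exactly with the paper's route, and to that extent the proposal is fine.

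Your direct chaining argument, however, has a genuine gap precisely where you suspect it: the bounded-overlap claim $\sum_j\chi_{B_j}\le M$ with $M=M(\Omega)$ does not follow from your construction when $s>1$. If $y\in B_j$, then $\rho(t_j)<2\dist(y,\partial\Omega)=:2d$ and $r_j\in[d/3,d)$, and the $s$-John lower bound $\rho(t_j)\ge c\,t_j^{s}$ forces $t_j\le(2d/c)^{1/s}$; your selection rule only separates the parameters by $t_j-t_{j+1}=\tfrac12 r_j\asymp d$ \emph{in arc length}, so the number of admissible indices is a priori only $O\bigl(d^{1/s-1}\bigr)$, which is unbounded as $d\to0$ when $s>1$. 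Arc-length separation gives no Euclidean separation of the centres (the curve may double back or spiral within $B(y,d)$ while still satisfying the $s$-John condition), so a multiplicity bound depending only on $\Omega$ is not available from this construction; carrying the overlap through instead produces an extra unbounded weight $\dist(y,\partial\Omega)^{(1-s)/s}$ in the integrand. The restriction $s<n/(n-1)$ does not repair this: in your argument it only makes the kernel locally integrable (hence the estimate non-vacuous), and in the cited theorem it is the admissibility condition on $\phi(t)=t^{s}$, but it has no bearing on the chain multiplicity. The gap is fixable, but it needs an extra idea: select the subchain by Euclidean separation rather than by arc length (for instance, take $t_{j+1}$ to be the last time $\gamma$ enters $B(x_j,\tfrac12 r_j)$, or greedily discard balls so that $|x_j-x_{j'}|\ge c\max(r_j,r_{j'})$ whenever $|j-j'|\ge2$), after which the centres of the balls containing $y$ form a $cd$-separated subset of $B(y,d)$ and the multiplicity is $O(1)$. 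The remaining steps --- termination of the chain, the link estimate via the $(1,1)$-Poincar\'e inequality, the terminal term via \cite[Lemma 7.16]{GT}, and the key kernel bound $r_j^{1-n}\le c\,|x-y|^{-s(n-1)}$ on $B_j$, which uses only $r_j\gtrsim t_j^{s}$, $r_j\le\diam\Omega$ and $s\ge1$ --- are sound.
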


\begin{remark}
For the proofs of theorems in this section it is  crucial that the pointwise estimate \eqref{sJohn_point-wise}
holds for all $x\in\Omega$.
\end{remark}

We prove the following theorem which generalises some earlier results to the Choquet integrals with respect to the $\delta$-dimensional Hausdorff content
whenever $\Omega $ is an $s$-John domain such that $1\le  s<n/(n-1)$.

\begin{theorem}\label{TheoremPoincare}
 Let $\Omega$ be a bounded   $s$-John domain in $\R^n$, $n\geq 2,$ with
 $1\le s < \frac{n}{n-1}$. 
If $\delta\in (0,n]$ and
$p>\delta /n$, then
there is a constant $c$  which depends only on $n$, $\delta$, $p$, and  $\Omega$  such that the inequality
\begin{equation*}
\inf_{b \in \R} \int_\Omega |u(x)-b|^p \, d \Ha^{\delta}_\infty
\le c  \int_\Omega |\nabla u(x)|^p \, d \Ha^{\delta}_\infty
\end{equation*}
holds for all $u \in C^1(\Omega)$.
\end{theorem}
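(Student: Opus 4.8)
Given $u\in C^1(\Omega)$, the plan is to take $b=u_B$ in the infimum, where $B=B(x_0,k\dist(x_0,\partial\Omega))$ is the ball supplied by Theorem~\ref{pointwise_sJohn}, to dominate the Riesz-type potential appearing in the pointwise estimate \eqref{sJohn_point-wise} by the centred Hardy--Littlewood maximal function, and then to invoke Adams' maximal function theorem, Theorem~\ref{thm:fractional-maximal-function}, with $\kappa=0$. The admissibility condition $p\in(\delta/n,\delta/0)=(\delta/n,\infty)$ of that theorem is exactly our hypothesis $p>\delta/n$ (recall the convention $\delta/0:=\infty$), and with $\kappa=0$ one has $\delta-\kappa p=\delta$, so the Hausdorff content dimension is unchanged.

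We may assume the right-hand side $\int_\Omega|\nabla u(x)|^p\,d\Ha^{\delta}_\infty$ is finite, as otherwise there is nothing to prove. When $\delta<n$, applying Lemma~\ref{GeneralizationOV} with $f=|\nabla u|^{pn/\delta}$, $\delta_1=\delta$, $\delta_2=n$, and using the comparability of $\Ha^{n}_\infty$ with Lebesgue measure, yields $|\nabla u|^{pn/\delta}\in L^1(\Omega)$; since $pn/\delta>1$ and $\Omega$ is bounded, this gives $|\nabla u|\chi_\Omega\in L^1(\R^n)$. When $\delta=n$ this is immediate from $p>1$ and the comparability of $\Ha^{n}_\infty$ with Lebesgue measure. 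Hence $M(|\nabla u|\chi_\Omega)$ is well defined and Theorem~\ref{thm:fractional-maximal-function} applies to $|\nabla u|\chi_\Omega$. Now set $D:=2\diam\Omega$ and $\kappa_0:=n-s(n-1)$, so $\kappa_0>0$ precisely because $s<n/(n-1)$. For every $x\in\Omega$ we have $\Omega\subset B(x,D)$ and $n-\kappa_0=s(n-1)$, so \eqref{sJohn_point-wise} gives
\begin{equation*}
|u(x)-u_B|\le c\int_\Omega\frac{|\nabla u(y)|}{|x-y|^{s(n-1)}}\,dy
=c\int_{B(x,D)}\frac{|\nabla u(y)|\chi_\Omega(y)}{|x-y|^{n-\kappa_0}}\,dy .
\end{equation*}
Splitting $B(x,D)$ into the dyadic annuli $B(x,2^{-j+1}D)\setminus B(x,2^{-j}D)$, $j\ge1$, bounding $|x-y|^{\kappa_0-n}$ from above on each annulus, estimating $\int_{B(x,2^{-j+1}D)}|\nabla u|\chi_\Omega$ by $(2^{-j+1}D)^n M(|\nabla u|\chi_\Omega)(x)$, and summing the geometric series (convergent since $\kappa_0>0$, with sum $c(n,\kappa_0)D^{\kappa_0}$), we obtain
\begin{equation*}
|u(x)-u_B|\le c(n,\Omega)\,M\big(|\nabla u|\chi_\Omega\big)(x)\qquad\text{for every }x\in\Omega .
\end{equation*}

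It now remains to raise this to the power $p$, integrate with respect to $\Ha^{\delta}_\infty$ in the Choquet sense, use monotonicity ((C5) and (C4)), and apply Theorem~\ref{thm:fractional-maximal-function} with $\kappa=0$:
\begin{align*}
\inf_{b\in\R}\int_\Omega|u(x)-b|^p\,d\Ha^{\delta}_\infty
&\le\int_\Omega|u(x)-u_B|^p\,d\Ha^{\delta}_\infty\\
&\le c\int_\Omega\big(M(|\nabla u|\chi_\Omega)(x)\big)^p\,d\Ha^{\delta}_\infty\\
&\le c\int_{\R^n}\big(M(|\nabla u|\chi_\Omega)(x)\big)^p\,d\Ha^{\delta}_\infty\\
&\le c\int_{\R^n}|\nabla u(x)|^p\chi_\Omega(x)\,d\Ha^{\delta}_\infty\\
&=c\int_\Omega|\nabla u(x)|^p\,d\Ha^{\delta}_\infty ,
\end{align*}
where the last equality holds because the super-level sets of $|\nabla u|^p\chi_\Omega$ in $\R^n$ coincide with those of $|\nabla u|^p$ in $\Omega$. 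Tracking the constants, $c$ depends only on $n$, $\delta$, $p$ and $\Omega$.

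The step I expect to be the main obstacle is passing from the singular kernel $|x-y|^{-s(n-1)}$ in \eqref{sJohn_point-wise} to an object to which Adams' theorem applies. The point to get right is that the correct tool is the \emph{non-fractional} maximal operator $M=M_0$, not a fractional $M_\kappa$: in the dyadic decomposition the geometric series converges against $M$ exactly when $s(n-1)<n$, i.e.\ in the admissible range of $s$, whereas testing against $M_{\kappa_0}$ with $\kappa_0=n-s(n-1)$ produces a divergent series. Boundedness of $\Omega$ enters twice --- to replace $\Omega$ by a fixed ball around each point $x$, and to deduce $|\nabla u|\chi_\Omega\in L^1(\R^n)$ from $|\nabla u|\chi_\Omega\in L^{pn/\delta}(\Omega)$ --- and the rest is routine use of the properties (C1)--(C7) of the Choquet integral together with the comparability of $\Ha^{n}_\infty$ and Lebesgue measure.
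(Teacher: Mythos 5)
Your proposal is correct and follows essentially the same route as the paper: the pointwise bound from Theorem~\ref{pointwise_sJohn}, domination of the kernel $|x-y|^{-s(n-1)}$ by the non-fractional maximal function via a dyadic annulus decomposition (which the paper imports as a cited lemma with $h(t)=t^{n+s(1-n)}$ and you reprove inline), and then the boundedness of $M$ on Choquet integrals for $p>\delta/n$ (the paper cites Orobitg--Verdera, you use Theorem~\ref{thm:fractional-maximal-function} with $\kappa=0$ --- the same statement). Your additional verification that $|\nabla u|\chi_\Omega\in L^1(\R^n)$ via Lemma~\ref{GeneralizationOV} is a harmless extra precaution the paper omits.
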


\begin{proof}
Choosing $\phi (t)=t^s$ in \cite[Lemma 3.1]{HH-S_Manuscripta} implies that
the assumption  (3.2) in \cite[Lemma 3.1]{HH-S_Manuscripta} is valid for the function $h(t)=t^{n+s(1-n)}$ whenever
$1\le  s< \frac{n}{n-1}$.
Thus \cite[Lemma 3.1]{HH-S_Manuscripta} gives that
\begin{equation*}
\int_{B(x,r)} \frac{|f(y)|}{|x-y|^{s(n-1)}} \, dy
\le c(n)r^{n+s(1-n)}Mf(x)
\end{equation*}
for all  $x \in \Omega$, and all $f \in L^1_{\loc}(\R^n)$.
On the other hand, by Theorem~\ref{pointwise_sJohn} for every $x\in\Omega$
\[
 |u(x) -u_B|
\le c(n,p,\Omega)
\int_{\Omega} \frac{|\nabla u(y)|}{|x-y|^{s(n-1)}} \, dy\,,
\]
where $B=B(x_0,k\dist (x_0, \partial \Omega ))$ with some constant $k\in (0,1)$ depending only on $\Omega$.
Hence, we have the pointwise estimate
\[
|u(x) -u_B|
\le  c(n,p,\Omega) \diam(\Omega)^{n+s(1-n)} M |\nabla u|(x)
\]
and  obtain 
\[
\int_\Omega |u(x) -u_B|^p d \Ha^{\delta}_\infty
\le  c(n, p, \Omega) \diam(\Omega)^{p(n+s(1-n))} \int_\Omega (M |\nabla u|(x) )^p d \Ha^{\delta}_\infty.
\]
The boundedness of the maximal operator \cite[Theorem]{OV} implies the inequality
\[
\begin{split}
\int_\Omega |u(x) -u_B|^p d \Ha^{\delta}_\infty
& \le c(n, p, \Omega, \delta) \diam(\Omega)^{p(n+s(1-n))} \int_\Omega |\nabla u(x)|^p d \Ha^{\delta}_\infty. \qedhere
\end{split}
\]

\end{proof}

\begin{remark} 
We recall that if  $1\le p <\infty$ and 
$\Omega$ in $\Rn$  is an $s$-John domain and $s<\frac{n+p-1}{n-1}$, then
the classical $(p,p)$-Poincar\'e inequality is valid for functions defined on $\Omega$.
In particular, if $1\le s <n/(n-1)$, then  the $(1,1)$-Poincar\'e inequality holds in $\Omega$ and
thus also the $(p,p)$-Poincar\'e inequality holds in $\Omega $ for all $p\geq 1$.
\end{remark}

In order to prove a Poincar\'e-Sobolev  inequality the following lemma is needed.
It is a generalisation of \cite[Lemma~3.6]{HH-S_JFA} where the case  $p\in (\delta /n, \delta )$  with $s=1$ was studied.
Now we  improve 
\cite[Lemma~3.6]{HH-S_JFA}
by  extending  the outcome  to the case $p=\frac{\delta}{n}$, too, and  also  letting $s \in [1, n/(n-1))$.

\begin{lemma}\label{lem:Choquet-Hedberg}
Let  $\delta \in (0, n]$, $s \in [1, n/(n-1))$, $p \in [\delta/n, \delta/(n+s(1-n)))$,  and $\kappa \in [0,  n + s(1-n))$. Then there exists a constant $c$ depending only on $n$, $\delta$, $\kappa$, $s$, and $p$ such that
\[
\int_\Rn \frac{|f(y)|}{|x-y|^{s(n-1)}} \, dy
\le c (M_\kappa f(x))^{1-\frac{p(n-\kappa+s(1-n))}{\delta- \kappa p}}
\bigg(\int_{\Rn} |f(y)|^{p} \, d \Ha^{\delta}_\infty \bigg)^{\frac{n-\kappa +s(1-n)}{\delta -\kappa p}} 
\]
for all $x \in \Rn$  and all $f \in L^1_{\loc}(\Rn)$.
\end{lemma}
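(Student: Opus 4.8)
The plan is to run a Hedberg-type truncation argument. Fix $x\in\Rn$ and a radius $\rho>0$, and split
\[
\int_{\Rn}\frac{|f(y)|}{|x-y|^{s(n-1)}}\,dy
=\int_{B(x,\rho)}\frac{|f(y)|}{|x-y|^{s(n-1)}}\,dy
+\int_{\Rn\setminus B(x,\rho)}\frac{|f(y)|}{|x-y|^{s(n-1)}}\,dy\,.
\]
I will bound the first (near) part by $M_\kappa f(x)$, the second (far) part by $\bigl(\int_{\Rn}|f|^p\,d\Ha^\delta_\infty\bigr)^{1/p}$, and then choose $\rho$ to balance the two. One may assume $M_\kappa f(x)$ and $\int_{\Rn}|f|^p\,d\Ha^\delta_\infty$ are both finite and nonzero, since otherwise both sides vanish or the right-hand side is infinite (every exponent appearing on the right is strictly positive under the hypotheses on $\kappa$ and $p$).

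For the near part I would decompose $B(x,\rho)$ into the dyadic annuli $A_j=B(x,2^{-j}\rho)\setminus B(x,2^{-j-1}\rho)$, $j\ge0$, estimate $|x-y|^{-s(n-1)}\le 2^{s(n-1)}(2^{-j}\rho)^{-s(n-1)}$ on $A_j$ and use $\int_{B(x,r)}|f|\,dy\le r^{\,n-\kappa}M_\kappa f(x)$. Summing the resulting geometric series — it converges because $n-\kappa-s(n-1)=n+s(1-n)-\kappa>0$, i.e.\ exactly the hypothesis $\kappa<n+s(1-n)$ — gives $\int_{B(x,\rho)}|x-y|^{-s(n-1)}|f(y)|\,dy\le c_1\,\rho^{\,n+s(1-n)-\kappa}M_\kappa f(x)$. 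For the far part the essential auxiliary fact is a ball estimate: for every ball $B=B(x,R)$,
\[
\int_{B}|f(y)|\,dy\le c\,R^{\,n-\delta/p}\Bigl(\int_{B}|f(y)|^p\,d\Ha^\delta_\infty\Bigr)^{1/p}\,.
\]
I would prove this by first passing from Lebesgue measure to the $\delta$-content through Lemma~\ref{GeneralizationOV} (with $\delta_1=\delta$, $\delta_2=n$), which gives $\int_{B}|f|\,dy\le c\bigl(\int_{B}|f|^{\delta/n}\,d\Ha^\delta_\infty\bigr)^{n/\delta}$, and then applying the Choquet--H\"older inequality~(C7) with the conjugate exponents $np/\delta$ and $np/(np-\delta)$ together with the trivial bound $\Ha^\delta_\infty(B(x,R))\le R^\delta$ (a single-ball cover); at the endpoint $p=\delta/n$ the H\"older step is vacuous and the estimate is immediate, which is precisely the point that extends \cite[Lemma~3.6]{HH-S_JFA} down to $p=\delta/n$. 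Decomposing $\Rn\setminus B(x,\rho)$ into the shells $B(x,2^{k+1}\rho)\setminus B(x,2^k\rho)$, $k\ge0$, applying the ball estimate on $B(x,2^{k+1}\rho)$ and replacing the integral over that ball by the one over $\Rn$, one obtains a geometric series that converges precisely because $n+s(1-n)-\delta/p<0$, i.e.\ because $p<\delta/(n+s(1-n))$; its sum is $c_2\,\rho^{\,n+s(1-n)-\delta/p}\bigl(\int_{\Rn}|f|^p\,d\Ha^\delta_\infty\bigr)^{1/p}$.

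Finally I would optimise in $\rho$. Writing $a=n+s(1-n)-\kappa>0$ and $\beta=n+s(1-n)-\delta/p<0$, the two bounds are $c_1\rho^{\,a}M_\kappa f(x)$ and $c_2\rho^{\,\beta}\bigl(\int_{\Rn}|f|^p\,d\Ha^\delta_\infty\bigr)^{1/p}$, and choosing $\rho$ so that $\rho^{\,a-\beta}$ is a fixed multiple of $\bigl(\int_{\Rn}|f|^p\,d\Ha^\delta_\infty\bigr)^{1/p}/M_\kappa f(x)$ makes the two terms comparable. Since $a-\beta=\delta/p-\kappa=(\delta-\kappa p)/p$, and this is positive because $\kappa p<(n+s(1-n))\cdot\delta/(n+s(1-n))=\delta$ by the two bounds on $\kappa$ and $p$, one gets $a/(a-\beta)=p(n-\kappa+s(1-n))/(\delta-\kappa p)$, and the resulting inequality
\[
\int_{\Rn}\frac{|f(y)|}{|x-y|^{s(n-1)}}\,dy
\le c\,\bigl(M_\kappa f(x)\bigr)^{1-\frac{a}{a-\beta}}\Bigl(\int_{\Rn}|f(y)|^p\,d\Ha^\delta_\infty\Bigr)^{\frac1p\cdot\frac{a}{a-\beta}}
\]
is exactly the asserted one. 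The main obstacle is the ball estimate, and through it making the whole chain run uniformly, in particular at the borderline $p=\delta/n$; the remainder is bookkeeping, the only points needing care being that the two geometric series converge exactly under the stated restrictions on $\kappa$ and $p$, and that $\delta-\kappa p>0$ so the final exponents are meaningful.
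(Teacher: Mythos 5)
Your argument is correct, and its skeleton --- dyadic annuli together with $\int_{B(x,r)}|f|\,dy\le r^{n-\kappa}M_\kappa f(x)$ for the near part, a Choquet-norm bound with a negative power of the radius for the far part, and the optimisation $\rho^{(\delta-\kappa p)/p}\sim \|f\|/M_\kappa f(x)$ --- is the same as the paper's. The difference lies in how the far part is treated. The paper applies classical H\"older's inequality in Lebesgue measure with exponents $np/\delta$ and $np/(np-\delta)$, evaluates $\int_{\Rn\setminus B(x,r)}|x-y|^{nps(1-n)/(np-\delta)}\,dy$ exactly via Hedberg's lemma, and converts the resulting $L^{np/\delta}$-norm of $f$ into the Choquet quantity by Lemma~\ref{GeneralizationOV}; since that H\"older step degenerates at $p=\delta/n$, the paper runs a separate, simpler argument there, bounding $|x-y|^{-s(n-1)}\le r^{-s(n-1)}$ outside the ball. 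You instead establish the single-ball estimate $\int_B|f|\,dy\le c\,R^{\,n-\delta/p}\bigl(\int_B|f|^p\,d\Ha^\delta_\infty\bigr)^{1/p}$ by combining Lemma~\ref{GeneralizationOV} with the Choquet--H\"older inequality (C7) and the trivial bound $\Ha^\delta_\infty(B(x,R))\le R^\delta$, and then sum over dyadic shells. This keeps the whole computation on the content side and, as you point out, treats $p=\delta/n$ and $p>\delta/n$ uniformly, because at the endpoint the H\"older step becomes vacuous rather than ill-defined --- a small structural improvement over the paper's two-case proof. Both routes converge under exactly the stated restrictions ($\kappa<n+s(1-n)$ for the near series, $p<\delta/(n+s(1-n))$ for the far one), and your bookkeeping of the exponents, including the positivity of $\delta-\kappa p$ and of the exponent of $M_\kappa f(x)$, checks out.
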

\begin{proof}
Let us write  $A_j =\{y \in \Rn : 2^{-j} r \le |x-y|< 2^{-j+1}r\}$, $r>0$. By estimating and using the sum of the geometric series we obtain that
\[
\begin{split}
\int_{B(x, r)} \frac{|f(y)|}{|x-y|^{s(n-1)}} \, dy 
&= \sum_{j=1}^\infty \int_{A_j} \frac{|f(y)|}{|x-y|^{s(n-1)}} \, dy\\
&\le  \sum_{j=1}^\infty (2^{-j}r)^{s(1-n)} \int_{B(x, 2^{-j+1}r)} |f(y)| \, dy\\
&\le   c(n, s, \kappa)  r^{n-\kappa + s(1-n)} M_\kappa f(x)\,.
\end{split}
\]
Now we consider the case $p>\delta /n$ at first.
For the integral over  the complement of  the ball  $B(x,r)$  with respect to $\Rn$ 
Hölder's inequality and Lemma~\ref{GeneralizationOV} imply that
\[
\begin{split}
&\int_{\Rn\setminus B(x, r)} \frac{|f(y)|}{|x-y|^{s(n-1)}} \, dy\\
&\le \Big(\int_{\Rn\setminus B(x, r)} |f(y)|^{\frac{np}{\delta}}\, dy \Big)^{\frac{\delta}{np}} 
\Big(\int_{\Rn\setminus B(x, r)} |x-y|^{\frac{nps(1-n)}{np-\delta}} \, dy\Big)^{\frac{np-\delta}{np}}\\
&\le  c(n, \delta, p)\Big(\int_{\Rn\setminus B(x, r)} |f(y)|^{p} \, d \Ha^{\delta}_\infty \Big)^{\frac{1}{p}} 
\Big(\int_{\Rn\setminus B(x, r)} |x-y|^{\frac{nps(1-n)}{np-\delta}} \, dy\Big)^{\frac{np-\delta}{np}}.
\end{split}
\]
By the well-known result 
\cite[Lemma]{Hed72}
the last integral  on the right-hand side  is
\[
\int_{\Rn\setminus B(x, r)} |x-y|^{\frac{nps(1-n)}{np-\delta}} \, dy
= \frac{\omega_{n-1}}{\frac{nps(n-1)}{np-\delta} -n } r^{n-\frac{nps(n-1)}{np-\delta}},
\]
where $\omega_{n-1}$ is the $n-1$-dimensional Hausdorff measure of the sphere.
Since  $p < \frac{\delta}{n+s(1-n)}$,  the term $n-\frac{nps(n-1)}{np-\delta}$ is negative.
Thus we have
\[
\int_\Rn \frac{|f(y)|}{|x-y|^{s(n-1)}} \, dy \le c  \big( r^{n-\kappa+s(1-n)} M_\kappa f(x) +  \|f\| r^{n-\frac{\delta}{p} +s(1-n)} \big),
\]
where $\|f\| := \Big(\int_{\Rn\setminus B(x, r)} |f(y)|^{p} \, d \Ha^{\delta}_\infty \Big)^{\frac{1}{p}}$.
Choosing 
\[
r= \Big( \frac{M_\kappa f(x)}{\|f\|} \Big)^{-\frac{p}{\delta - \kappa p}}
\]
yields that
\[
\int_\Rn \frac{|f(y)|}{|x-y|^{s(n-1)}} \, dy \le c (M_\kappa f(x))^{1-\frac{p(n-\kappa+s(1-n))}{\delta- \kappa p}} \|f\|^{\frac{p(n-\kappa +s(1-n))}{\delta-\kappa p}}
\]
 for all $x \in \Rn$.
 This inequality gives the claim whenever  $p \in (\delta/n, \delta/(n+s(1-n)))$.
 
Now we consider the case $p=\frac{\delta}{n}$. 
For the integral over the complement of  the ball  $B(x,r)$  with respect to $\Rn$,  by Lemma~\ref{GeneralizationOV} we obtain that
\[
\begin{split}
\int_{\Rn\setminus B(x, r)} \frac{|f(y)|}{|x-y|^{s(n-1)}} \, dy
&\le  r^{s(1-n)} \int_{\Rn\setminus B(x, r)} |f(y)|\, dy   \\
&\le  c(n, \delta) r^{s(1-n)}\Big(\int_{\Rn\setminus B(x, r)} |f(y)|^{\frac{\delta}{n}} \, d \Ha^{\delta}_\infty\Big)^{\frac{n}{\delta}}.
\end{split}
\]
Hence,
\[
\int_\Rn \frac{|f(y)|}{|x-y|^{s(n-1)}} \, dy \le c  \big( r^{n-\kappa+s(1-n)} M_\kappa f(x) +  \|f\| r^{s(1-n)} \big),
\]
where $\|f\| = \Big(\int_{\Rn\setminus B(x, r)} |f(y)|^{\frac{\delta}{n}} \, d \Ha^{\delta}_\infty \Big)^{\frac{n}{\delta}}$ as before.
Choosing 
\[
r= \Big( \frac{M_\kappa f(x)}{\|f\|} \Big)^{-\frac{1}{n - \kappa}}
\]
gives
\[
\int_\Rn \frac{|f(y)|}{|x-y|^{s(n-1)}} \, dy \le c (M_\kappa f(x))^{\frac{s(n-1)}{n- \kappa}} \|f\|^{1-\frac{s(n-1)}{n-\kappa}}
\]
 for all $x \in \Rn$, which is  the claim  whenever $p=\delta /n$.
\end{proof}

Combining Theorem \ref{pointwise_sJohn}, Lemma \ref{lem:Choquet-Hedberg}, and the boundedness of the 
Hardy -Littlewood maximal operator with respect to the Hausdorff content
\cite[Theorem 7(a)]{Adams1998}
 implies the  following Poincar\'e-Sobolev  inequality.

\begin{theorem}\label{TheoremPoincareSobolev}
Let $\Omega$ be a bounded  $s$-John domain in $\Rn$, $1\le s <n/(n-1)$,
and let  $\delta \in (0, n]$, $p \in (\delta/n, \delta/(n+s(1-n)))$, and $\kappa \in [0,  n + s(1-n))$.
Then there exists a constant $c$ depending only on $n$, $\delta$, $\kappa$,  $p$, and $\Omega$ such that  
\begin{equation*}
\inf_{b \in \R}\Big(\int_\Omega |u(x) -b|^{q} d \Ha^{\delta- \kappa p}_\infty \Big)^{\frac{1}{q}}
\le c    \Big(\int_\Omega |\nabla u(x)|^p d \Ha^{\delta}_\infty\Big)^{\frac{1}{p}}
\end{equation*}
for all $u \in C^1(\Omega)$.
Here $q=\frac{(\delta-\kappa p)p}{\delta -p(n+s(1-n))}$.
\end{theorem}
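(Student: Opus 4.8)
\emph{Proof (plan).}
The plan is to feed the pointwise estimate of Theorem~\ref{pointwise_sJohn} into the Hedberg‑type splitting of Lemma~\ref{lem:Choquet-Hedberg} and then absorb the resulting fractional maximal function by Adams' boundedness theorem, Theorem~\ref{thm:fractional-maximal-function}; the exponent $q$ in the statement is precisely the one that makes these three ingredients fit together. We may assume that $\int_\Omega |\nabla u(x)|^{p}\,d\Ha^{\delta}_\infty<\infty$, for otherwise there is nothing to prove; a routine argument using the boundedness of $\Omega$ (together with Lemma~\ref{GeneralizationOV} when $\delta<n$ and the comparability of $\Ha^{n}_\infty$ with Lebesgue measure when $\delta=n$) then shows that $|\nabla u|$, extended by zero to $\Rn$, belongs to $L^1_{\loc}(\Rn)$, so that all the quoted results apply to $f:=|\nabla u|$. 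Let $B=B(x_0,k\dist(x_0,\partial\Omega))$ be as in Theorem~\ref{pointwise_sJohn} and put $b:=u_B$. That theorem gives, for every $x\in\Omega$,
\[
|u(x)-u_B|\le c\int_{\Rn}\frac{|\nabla u(y)|}{|x-y|^{s(n-1)}}\,dy,
\]
and since $p\in(\delta/n,\delta/(n+s(1-n)))$ and $\kappa\in[0,n+s(1-n))$, Lemma~\ref{lem:Choquet-Hedberg} applies and yields
\[
|u(x)-u_B|\le c\,\bigl(M_\kappa|\nabla u|(x)\bigr)^{1-p\beta}\Bigl(\int_{\Rn}|\nabla u(y)|^{p}\,d\Ha^{\delta}_\infty\Bigr)^{\beta},\qquad \beta:=\frac{n-\kappa+s(1-n)}{\delta-\kappa p}.
\]

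Next I raise both sides to the power $q$. A direct computation shows that with $q=\dfrac{(\delta-\kappa p)p}{\delta-p(n+s(1-n))}$ one has exactly $q(1-p\beta)=p$, so
\[
|u(x)-u_B|^{q}\le c\,\bigl(M_\kappa|\nabla u|(x)\bigr)^{p}\Bigl(\int_{\Rn}|\nabla u(y)|^{p}\,d\Ha^{\delta}_\infty\Bigr)^{q\beta}.
\]
Integrating over $\Omega$ in the Choquet sense with respect to $\Ha^{\delta-\kappa p}_\infty$, pulling the constant factor out by (C1), using (C5), and then enlarging the domain of integration of the maximal function to $\Rn$ by (C4), we may invoke Theorem~\ref{thm:fractional-maximal-function}: its hypotheses hold because $\delta-\kappa p\in(0,n]$ and $p\in(\delta/n,\delta/\kappa)$, the latter inclusion following from $\kappa<n+s(1-n)$. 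This gives
\[
\int_\Omega|u(x)-u_B|^{q}\,d\Ha^{\delta-\kappa p}_\infty\le c\Bigl(\int_\Omega|\nabla u(x)|^{p}\,d\Ha^{\delta}_\infty\Bigr)^{q\beta+1},
\]
where we have also used that $\nabla u$ vanishes outside $\Omega$.

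Taking $q$-th roots and noting that $\beta+\tfrac1q=\tfrac1p$ (again a consequence of $q(1-p\beta)=p$), we obtain the asserted inequality with the particular choice $b=u_B$, and hence also after taking the infimum over $b\in\R$. The algebraic identities $q(1-p\beta)=p$ and $\beta+\tfrac1q=\tfrac1p$ are routine and I would only state them. The point that needs genuine care, and is really the crux of the argument, is to check that the three parameter restrictions $s\in[1,n/(n-1))$, $p\in(\delta/n,\delta/(n+s(1-n)))$, $\kappa\in[0,n+s(1-n))$ are simultaneously compatible with everything invoked: one must verify that $n+s(1-n)\in(0,1]$, hence $\kappa\in[0,n)$ and $1-p\beta>0$ so that $q$ is finite and positive; that $\delta-\kappa p\in(0,\delta]$ so that $\Ha^{\delta-\kappa p}_\infty$ is an admissible Hausdorff content; and that $p<\delta/\kappa$ so that Adams' maximal function estimate is available with the parameters $(\delta,\kappa,p)$. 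Once these compatibilities are in place, the proof is just the chain of inequalities above.
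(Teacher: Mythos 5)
Your proposal is correct and follows essentially the same route as the paper: the pointwise bound of Theorem~\ref{pointwise_sJohn} is composed with the Hedberg-type estimate of Lemma~\ref{lem:Choquet-Hedberg}, the result is raised to the power $q$ (chosen exactly so that the maximal function appears to the power $p$), and Adams' Theorem~\ref{thm:fractional-maximal-function} absorbs $M_\kappa|\nabla u|$; the exponent identities $q(1-p\beta)=p$ and $\beta+\tfrac1q=\tfrac1p$ you record are the same arithmetic the paper performs implicitly. Your additional verification of the parameter compatibilities (in particular $n+s(1-n)\in(0,1]$ and $p<\delta/\kappa$) is a welcome elaboration of a step the paper passes over in one line.
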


\begin{proof}
By  combining Theorem \ref{pointwise_sJohn} and Lemma \ref{lem:Choquet-Hedberg} we obtain
\[
|u(x)-u_B|^{\frac{(\delta - \kappa p) p}{\delta - \kappa p - p(n-\kappa +s(1-n))}}
\le c 
\big(M_\kappa |\nabla u|(x)\big)^{p}
\Bigg(\int_{\Omega} |\nabla u(y)|^{p} \, d \Ha^{\delta}_\infty \bigg)^{\frac{p(n-\kappa +s(1-n))}{\delta -\kappa p -p(n-\kappa +s(1-n))}}  
\]
for all $x \in \Omega$. Here  we have 
defined $|\nabla u(x)|=0$,  when $x\in\Rn\backslash\Omega$.
By the assumption we have 
$p \in (\delta/n, \delta /\kappa)$. Thus Theorem~\ref{thm:fractional-maximal-function} yields 
 that
\[  
\int_\Rn \big(M_\kappa |\nabla u|(x) \big)^p \, d \Ha^{\delta-\kappa p}_\infty \le c \int_\Rn |\nabla u(x)|^p \, d \Ha^{\delta}_\infty
= c \int_\Omega |\nabla u(x)|^p \, d \Ha^{\delta}_\infty.
\]
These inequalities imply the claim.
\end{proof}

\begin{remark}
The previous theorem  with $s=1$ recovers the result in the sense of Choquet integrals of the classical  $1$-John domains, \cite[Theorem 3.7]{HH-S_JFA}.
Whenever $\delta=n$ and $1<p<n$, the well-known Sobolev inequality for $1$-John domains is  also recovered
\cite{Bojarski1988}.
However whenever  $s>1$, 
the previous theorem  with $\delta =n$ and $\kappa =0$ does not give the known sharp bound for $q$  \cite{KM}.
\end{remark}

If we let $p=\frac{\delta}{n}$,
a weak-type  Poincar\'e-Sobolev  estimate  holds.
A weak-type Poincar\'e-Sobolev inequality for $s=1$ has been proved in 
\cite[Theorem 3.6]{{HH-S_La}}.
Now we generalise it to the case  $1< s <n/(n-1)$.

\begin{theorem}\label{weaklimit}
Let $\Omega$ be a bounded  $s$-John domain in $\Rn$, $1\le s <n/(n-1)$,
and let  $\delta \in (0, n]$. 
If $u \in C^1(\Omega)$, then
for every $t>0$
\begin{equation*}
\inf_{b \in \R} \, \Ha_\infty^{\delta}\bigl(\{x\in\Omega :\vert u(x)-b\vert >t\}\bigr)
\le ct^{-\frac{\delta}{s(n-1)}} \Big(\int_{\Omega}\vert \nabla u(x)\vert^{\frac{\delta}{n}}\,d\Ha_\infty^{\delta} \Big)^{\frac{n}{s(n-1)}}\,,
\end{equation*}
where $c$ is  a constant which  depends only on $n$, $\delta$, and  $\Omega$.
\end{theorem}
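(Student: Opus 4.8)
The plan is to run, at the borderline exponent $p=\delta/n$ with $\kappa=0$, the same scheme as in the proof of Theorem~\ref{TheoremPoincareSobolev}: combine the $s$-John pointwise estimate of Theorem~\ref{pointwise_sJohn} with the endpoint case $p=\delta/n$, $\kappa=0$ of the Hedberg-type Lemma~\ref{lem:Choquet-Hedberg}, pass to distribution functions, and close with the weak-type $(\delta/n,\delta/n)$ bound for the Hardy--Littlewood maximal operator with respect to Hausdorff content. The latter is the endpoint companion of Theorem~\ref{thm:fractional-maximal-function} (take $\kappa=0$ there, $p\to\delta/n$) and is exactly the ingredient already used in the $s=1$ case \cite[Theorem~3.6]{HH-S_La}.

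First I would fix $u\in C^1(\Omega)$ and discard the trivial case $\int_\Omega|\nabla u|^{\delta/n}\,d\Ha_\infty^\delta=\infty$; assuming the integral finite, abbreviate $A:=\bigl(\int_\Omega|\nabla u|^{\delta/n}\,d\Ha_\infty^\delta\bigr)^{n/\delta}$. Extend $|\nabla u|$ by zero outside $\Omega$ and set $f:=|\nabla u|\chi_\Omega$; then $\int_\Rn f^{\delta/n}\,d\Ha_\infty^\delta=A^{\delta/n}$, and by Lemma~\ref{GeneralizationOV} (or, when $\delta=n$, the comparability of $\Ha^n_\infty$ with Lebesgue measure) we have $f\in L^1(\Rn)\subset L^1_{\loc}(\Rn)$, so $Mf$ is defined. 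By Theorem~\ref{pointwise_sJohn}, for every $x\in\Omega$,
\[
|u(x)-u_B|\le c\int_\Rn\frac{f(y)}{|x-y|^{s(n-1)}}\,dy,\qquad B=B\bigl(x_0,k\dist(x_0,\partial\Omega)\bigr),
\]
and Lemma~\ref{lem:Choquet-Hedberg}, applied with $p=\delta/n$ and $\kappa=0$ (both admissible since $0\in[0,n+s(1-n))$ and $\delta/n\in[\delta/n,\delta/(n+s(1-n)))$, which uses $1\le s<n/(n-1)$), converts this into
\[
|u(x)-u_B|\le c\,\bigl(Mf(x)\bigr)^{\frac{s(n-1)}{n}}A^{1-\frac{s(n-1)}{n}},\qquad x\in\Omega.
\]

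Then, for each $t>0$, this pointwise inequality gives the level-set inclusion
\[
\{x\in\Omega:|u(x)-u_B|>t\}\subset\{x\in\Rn:Mf(x)>\lambda\},\qquad \lambda:=\Bigl(\frac{t}{c\,A^{1-s(n-1)/n}}\Bigr)^{\frac{n}{s(n-1)}},
\]
so that the weak-type maximal bound $\Ha_\infty^\delta(\{Mf>\lambda\})\le c\,\lambda^{-\delta/n}\int_\Rn f^{\delta/n}\,d\Ha_\infty^\delta=c\,A^{\delta/n}\lambda^{-\delta/n}$ applies. Inserting the value of $\lambda$ and cancelling the powers of $A$ collapses the right-hand side to
\[
c\,t^{-\frac{\delta}{s(n-1)}}A^{\frac{\delta}{s(n-1)}}=c\,t^{-\frac{\delta}{s(n-1)}}\Bigl(\int_\Omega|\nabla u(x)|^{\delta/n}\,d\Ha_\infty^\delta\Bigr)^{\frac{n}{s(n-1)}},
\]
and taking the infimum over $b\in\R$ (bounded above by the choice $b=u_B$, which is finite since $\overline B\subset\Omega$) yields the asserted inequality.

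The one genuinely non-elementary step is the weak-type $(\delta/n,\delta/n)$ estimate for $M$ with respect to $\Ha_\infty^\delta$: since $\Ha_\infty^\delta$ is merely subadditive, a plain Vitali covering argument does not deliver it, and one either quotes the known endpoint maximal inequality \cite{Adams1998}, \cite{OV} or derives it by passing to the strongly subadditive dyadic content $\tilde{\Ha}_\infty^\delta$ and transferring back via comparability. Everything else is the exponent bookkeeping dictated by Lemma~\ref{lem:Choquet-Hedberg}, and it is structurally identical to the $s=1$ argument; the new content is that this bookkeeping still closes for $1<s<n/(n-1)$ (the decisive point being that $n+s(1-n)=n-s(n-1)>0$ precisely in this range), now fed by the $s$-John potential estimate \eqref{sJohn_point-wise} and the endpoint $p=\delta/n$ of the Hedberg lemma.
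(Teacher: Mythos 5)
Your proposal is correct and follows essentially the same route as the paper's own proof: the $s$-John pointwise estimate of Theorem~\ref{pointwise_sJohn}, the endpoint case $p=\delta/n$, $\kappa=0$ of Lemma~\ref{lem:Choquet-Hedberg}, and the weak-type $(\delta/n,\delta/n)$ bound for $M$ with respect to $\Ha_\infty^{\delta}$ from \cite{Adams1998}, \cite{OV}, with the same exponent bookkeeping. The only cosmetic difference is that you treat $\delta\in(0,n)$ and $\delta=n$ uniformly, whereas the paper separates the case $\delta=n$ to invoke the classical weak-type maximal inequality via the comparability of $\Ha^n_\infty$ with Lebesgue measure.
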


\begin{proof}
Let us suppose first that $\delta \in (0, n)$.
Since $\Omega$ is an  $s$-John domain, Theorem~\ref{pointwise_sJohn} gives
for every $x\in\Omega$
\begin{equation}\label{pointwise}
|u(x) -u_B| \le c \int_\Omega \frac{|\nabla u(y)|}{|x-y|^{s(n-1)}}\,dy \,,
\end{equation}
where $B=B(x_0, k\dist(x_0, \partial \Omega))$ with some constant $k\in (0,1)$.
Combining inequality  \eqref{pointwise} with Lemma~\ref{lem:Choquet-Hedberg} implies that
\begin{equation}\label{pointwisemax}
|u(x) -u_B| \le c (M_\kappa |\nabla u|(x))^{\frac{s(n-1)}{n- \kappa}} N^{1-\frac{s(n-1)}{n-\kappa}},
\end{equation}
where we have written  $N:= \Big(\int_{\Rn\setminus B(x, r)} |\nabla u(y)|^{\frac{\delta}{n}} \, d \Ha^{\delta}_\infty \Big)^{\frac{n}{\delta}}$.
Next we choose $\kappa =0$.
By using  \eqref{pointwisemax}  and a weak-type estimate of the maximal operator by  \cite[Theorem 7 (ii)]{Adams1998}  or \cite[Theorem ii]{OV} we obtain
\[
\begin{split}
\Ha_\infty^{\delta}\Big(\big\{x\in\Omega :\vert u(x)-u_B\vert >t \big\}\Big)
&\le  \Ha_\infty^{\delta}\biggl(\Big\{x\in\Omega :c (M|\nabla u|(x))^{\frac{s(n-1)}n} N^{\frac{n+s(1-n)}{n}} >t \Big\}\biggr)\\
&=  \Ha_\infty^{\delta}\biggl(\Big\{x\in\Omega :  M \big(c N^{\frac{n+s(1-n)}{s(n-1)}}|\nabla u|\big)(x) >t^{\frac{n}{s(n-1)}}\Big\}\biggr)\\
&\le (t^{\frac{n}{s(n-1)}})^{-\frac{\delta}{n}} \int_{\Omega}\big(c N^{\frac{n+s(1-n)}{s(n-1)}}\vert \nabla u(x)\vert \big)^{\frac{\delta}n}\,d\Ha_\infty^{\delta}\\
&\le c t^{- \frac{\delta}{s(n-1)}} \Big(\int_{\Omega}\vert \nabla u(x)\vert^{\frac{\delta}n}\,d\Ha_\infty^{\delta} \Big)^{\frac{n}{s(n-1)}}\,.
\end{split}
\]
This gives the claim whenever $\delta\in (0,n)$.

If $\delta=n$, 
then the claim follows similarly as above.  The difference is that  the well-known  weak-type estimate of the Hardy-Littlewood maximal operator 
can be used and  also the fact that $\Ha_\infty^{n}(E) \approx |E|$ for all measurable  sets $E \subset \Rn$.
\end{proof}

\begin{remark}
(1) If functions are continuously differentiable, compactly supported functions, that is $C^1_0$-functions on domains, then the regularity of the boundary does not 
affect as it does for $C^1$-functions in Poincar\'e-Sobolev  inequalities. We refer to \cite[Chapter 4]{HH-S_JFA}.

(2) We point out that  the proofs of 
Theorems~\ref{TheoremPoincare}, \ref{TheoremPoincareSobolev}, and \ref{weaklimit}  
give stronger inequalities than in the statements in these theorems, respectively. Namely, 
in the proofs we estimate $\vert u(x)-u_B\vert $ where
$B:=B(x_0,k\dist (x_0,\partial \Omega ))$ 
and $u_B=\vert B\vert^{-1}\int_B u(x)\,dx$
with $k\in(0,1)$  being a constant and depending 
on $x_0$ and John constants of $\Omega$.
Hence, the following three inequalities hold for a bounded $s$-John domain in $\Rn$  with
$1\le s <n/(n-1)$,  $\delta \in (0, n]$.

\begin{itemize}
\item[(a)]
If $p>\delta /n$, then
there is a constant $c$  which depends only on $n$, $\delta$, $p$, and  $\Omega$  such that the inequality
\begin{equation*}
 \int_\Omega |u(x)-u_B|^p \, d \Ha^{\delta}_\infty
\le c  \int_\Omega |\nabla u(x)|^p \, d \Ha^{\delta}_\infty
\end{equation*}
is valid for
all $u\in C^1(\Omega)$.
\item[(b)]
Let
$p \in (\delta/n, \delta/(n+s(1-n)))$ and $\kappa \in [0,  n + s(1-n))$.
Then there exists a constant $c$ depending only on $n$, $\delta$, $\kappa$, $p$, and $\Omega$ such that  
\begin{equation*}
\Big(\int_\Omega |u(x) -u_B|^{q} d \Ha^{\delta- \kappa p}_\infty \Big)^{\frac{1}{q}}
\le c    \Big(\int_\Omega |\nabla u(x)|^p d \Ha^{\delta}_\infty\Big)^{\frac{1}{p}}
\end{equation*}
for all $u \in C^1(\Omega)$.
Here $q=\frac{(\delta-\kappa p)p}{\delta -p(n+s(1-n))}$.
\item[(c)]
If $u\in C^1(\Omega)$, then
for every $t>0$
\begin{equation*}
 \Ha_\infty^{\delta}\bigl(\{x\in\Omega :\vert u(x)-u_B\vert >t\}\bigr)
\le ct^{-\frac{\delta}{s(n-1)}} \Big(\int_{\Omega}\vert \nabla u(x)\vert^{\frac{\delta}{n}}\,d\Ha_\infty^{\delta} \Big)^{\frac{n}{s(n-1)}}\,,
\end{equation*}
where $c$ is a constant which depends only on $n$, $\delta$, and $\Omega$. 
\end{itemize}
\end{remark}

By Theorems \ref{TheoremPoincare}-\ref{weaklimit}
we are able to write corollaries for $s$-spires, that is for $s$-power cusps.
In particular, Theorem \ref{TheoremPoincare} implies Corollary \ref{PoincareCorollary}.
Theorems \ref{TheoremPoincareSobolev}
and \ref{weaklimit} give Poincar\'e-Sobolev inequalities for $s$-power cusps.
At least part $(a)$ 
in the following corollary is not sharp.  If $\delta =n $ and $\kappa =0$, for
 the sharp exponent on the left-hand side   in 
 %\eqref{cuspPoincareSobolev} 
 the inequality of the function in part (a)
 we refer to  \cite{Mazya}, \cite{MazyaPoborchi}.

\begin{corollary}
Let $\delta\in (0,n]$, 
$s\in [1,n/(n-1)$, and let
$\Omega$ be an $s$-spire in $\Rn$, $n\geq 2$, that is
$
\Omega =\{(x_1, x_2, \dots , x_n)\in (0,1)\times \R^{n-1}\, : \vert\vert (x_2, x_3, \dots ,x_n)\vert\vert < x^s\}\,.
$
\begin{itemize}
\item[(a)] \,
If $u \in C^1(\Omega)$
and
$p \in (\delta/n, \delta/(n+s(1-n)))$, then
\begin{equation*}%\label{cuspPoincareSobolev}
\inf_{b \in \R}\Big(\int_\Omega |u(x) -b|^{\frac{\delta p}{\delta - p(n+s(1-n))}} d \Ha^{\delta- \kappa p}_\infty \Big)^{\frac{\delta -p(n+s(1-n))}{  \delta p}}                             
\le c    \Big(\int_\Omega |\nabla u(x)|^p d \Ha^{\delta}_\infty\Big)^{\frac{1}{p}}\,
\end{equation*}
where  $c$ is a constant which depends only on $n$, $\delta$, $p$, and $s$.
\item[(b)] \,
If $u \in C^1(\Omega)$ and
$p=\delta /n$, then 
for every $t>0$
\begin{equation*}
\inf_{b \in \R} \, \Ha_\infty^{\delta}\bigl(\{x\in\Omega :\vert u(x)-b\vert >t\}\bigr)
\le ct^{-\frac{\delta}{s(n-1)}} \Big(\int_{\Omega}\vert \nabla u(x)\vert^{\frac{\delta}{n}}\,d\Ha_\infty^{\delta} \Big)^{\frac{n}{s(n-1)}}\,
\end{equation*}
where $c$ is 
a constant which depends only on $n$, $\delta$, $p$, and $s$.
\end{itemize}
\end{corollary}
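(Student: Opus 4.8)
The plan is to recognise the $s$-spire $\Omega$ as a bounded $s$-John domain whose geometric data depend only on $n$ and $s$, and then to obtain parts~(a) and~(b) directly from Theorem~\ref{TheoremPoincareSobolev} and Theorem~\ref{weaklimit}. First, from \eqref{spire} the set $\Omega$ is an open, connected, bounded domain, contained in $(0,1)\times B(0,1)$, so $\diam(\Omega)\le c(n)$.

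The substantive point is to check that $\Omega$ is an $s$-John domain. I would take the John centre $x_0=(\tfrac12,0,\dots,0)$, on the central axis $\{(\tau,0,\dots,0):0<\tau<1\}$. For $x=(x_1,\bar x)\in\Omega$, with $\bar x=(x_2,\dots,x_n)$ and $|\bar x|<x_1^{s}$, I would let $\gamma_x$ be the concatenation of the radial segment from $x$ to $(x_1,0,\dots,0)$ with the axial segment from $(x_1,0,\dots,0)$ to $x_0$, reparametrised by arc length; then $\ell(\gamma_x)\le|\bar x|+|x_1-\tfrac12|\le c(n)$. For the required lower bound $\dist(\gamma_x(t),\partial\Omega)\ge c(n,s)\,t^{s}$ I would use the elementary estimate $\dist((\tau,0,\dots,0),\partial\Omega)\ge c(n,s)\,\tau^{s}$ for $\tau\in(0,1)$, together with the fact that moving radially toward the axis (at fixed first coordinate) only increases the distance to the lateral surface and leaves the distance to the end caps unchanged, hence does not decrease $\dist(\cdot,\partial\Omega)$; combining these with $|\bar x|<x_1^{s}\le x_1$ and $\diam(\Omega)\le c(n)$ gives the bound uniformly in $t\in[0,\ell(\gamma_x)]$. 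Thus $\Omega$ is an $s$-John domain with constants depending only on $n$ and $s$; this membership is also recorded in \cite{ADLG2013}.

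Since $s\in[1,n/(n-1))$, we have $n+s(1-n)>0$, so $\kappa=0$ lies in $[0,n+s(1-n))$, and the assumptions $\delta\in(0,n]$ and $p\in(\delta/n,\delta/(n+s(1-n)))$ are precisely the hypotheses of Theorem~\ref{TheoremPoincareSobolev} with $\kappa=0$. Applying that theorem to the $s$-John domain $\Omega$ gives part~(a), with $q=\frac{\delta p}{\delta-p(n+s(1-n))}$ and the content on the left-hand side equal to $\Ha^{\delta}_\infty$ (the case $\kappa=0$). Part~(b) is Theorem~\ref{weaklimit} applied to $\Omega$ with $p=\delta/n$. In each case the constant supplied by the theorem depends on $\Omega$ only through $\diam(\Omega)$ and the $s$-John constants, hence, by the previous paragraph, only on $n$ and $s$; together with the explicit dependence on $\delta$ (and on $p$) this yields $c=c(n,\delta,p,s)$ as claimed.

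I expect the main obstacle to be the geometry in the second paragraph: establishing $\dist((\tau,0,\dots,0),\partial\Omega)\asymp\tau^{s}$ near the cusp tip (one minimises $(\tau-y_1)^2+y_1^{2s}$ over the lateral surface $\{|\bar y|=y_1^{s}\}$ and uses $2s-1>1$ when $s>1$, the case $s=1$ being a circular cone), and then verifying the $s$-John inequality uniformly along the two-piece curve, including at the junction. These steps are routine but must be carried out with some care about the constants, since the corollary asserts dependence only on $n$, $\delta$, $p$, and $s$.
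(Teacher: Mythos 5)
Your overall route -- reduce part (a) to Theorem~\ref{TheoremPoincareSobolev} with $\kappa=0$ (which is also the sensible way to resolve the mismatch in the statement between the exponent $\frac{\delta p}{\delta-p(n+s(1-n))}$ and the content $\Ha^{\delta-\kappa p}_\infty$) and part (b) to Theorem~\ref{weaklimit} -- is exactly what the paper does; the paper offers no argument beyond the remark that these theorems apply to $s$-power cusps. So the only substantive content of your write-up is the verification that the $s$-spire is an $s$-John domain with constants depending only on $n$ and $s$, and that verification contains a genuine error. The difficulty is the flat face $F=\{1\}\times\overline{B(0,1)}$, which belongs to $\partial\Omega$. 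Your ``elementary estimate'' $\dist((\tau,0,\dots,0),\partial\Omega)\ge c(n,s)\tau^{s}$ is false for $\tau$ near $1$, since that distance is at most $1-\tau$. Worse, the two-piece curve fails the $s$-John condition for points near $F$: take $n=2$, $s=1$, $x=(1-\ve,\tfrac12)$ with $\ve$ small; at arc length $t=\tfrac14$ along your radial segment the point is $(1-\ve,\tfrac14)$, whose distance to $\partial\Omega$ is $\ve$ (the distance to $F$), while the condition demands at least $c\,t^{s}=c/4$. Moving radially toward the axis does not decrease the distance to the lateral surface, as you say, but it leaves the distance to $F$ fixed at $1-x_1$, and it is the full distance to $\partial\Omega$ that must grow like $t^{s}$ along the curve; the axial piece has the same defect when $x_1$ is close to $1$. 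The analogous counterexample works for every $s\in[1,n/(n-1))$.

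The claim itself is true and the repair is not hard, but it is not the curve you propose. For $1\le s<n/(n-1)\le 2$ one checks that $\Omega$ is star-shaped with respect to $x_0=(\tfrac12,0,\dots,0)$: the constraint reduces to $x_1\bigl((1-\lambda)^{1/s}-(1-\lambda)\bigr)\le\lambda/2$, which follows from $\tfrac12-\tfrac1s(1-\lambda)^{(1-s)/s}\le\tfrac12-\tfrac1s<0$ for $s<2$. The straight segment from $x$ to $x_0$ then recedes from $F$ and from the lateral surface simultaneously, and the distance lower bound $\dist(\gamma_x(t),\partial\Omega)\ge c(n,s)t^{s}$ can be verified along it; alternatively, treat the Lipschitz region $\{x_1>\tfrac34\}$ by a standard $1$-John (cone) curve and reserve your radial-then-axial curve for the cusp region $\{x_1\le\tfrac34\}$, where it does work because there the end face is irrelevant. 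With either repair the John constants depend only on $n$ and $s$, and the remainder of your argument, including the bookkeeping that turns the dependence on $\Omega$ into dependence on $n$ and $s$, goes through.
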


\bibliographystyle{amsalpha}

\end{document}